\newtheorem{theorem}{Theorem}
\newtheorem{q}[theorem]{Question}
\newtheorem{conj}[theorem]{Conjecture}
\newtheorem{cor}[theorem]{Corollary}
\def\F{\mathcal{F}}
\def\x{{\bf x}}
\def\y{{\bf y}}
\begin{document}

\title{Matchings and Hamilton Cycles with Constraints on Sets of Edges}
\author{J.~Robert Johnson\thanks{School of Mathematical Sciences, Queen Mary University of London, London E1 4NS (\tt{r.johnson@qmul.ac.uk})}} 

\maketitle

\begin{abstract}
The aim of this paper is to extend and generalise some work of Katona on the existence of perfect matchings or Hamilton cycles in graphs subject to certain constraints. The most general form of these constraints is that we are given a family of sets of edges of our graph and are not allowed to use all the edges of any member of this family. We consider two natural ways of expressing constraints of this kind using graphs and using set systems.

For the first version we ask for conditions on regular bipartite graphs $G$ and $H$ for there to exist a perfect matching in $G$, no two edges of which form a $4$-cycle with two edges of $H$.

In the second, we ask for conditions under which a Hamilton cycle in the complete graph (or equivalently a cyclic permutation) exists, with the property that it has no collection of intervals of prescribed lengths whose union is an element of a given family of sets. For instance we prove that the smallest family of $4$-sets with the property that every cyclic permutation of an $n$-set contains two adjacent pairs of points has size between $(1/9+o(1))n^2$ and $(1/2-o(1))n^2$.
We also give bounds on the general version of this problem and on other natural special cases.

We finish by raising numerous open problems and directions for further study. 

Keywords: Hamilton cycle, matching, extremal graph theory.
\end{abstract}

\section{Introduction}

Many results in graph theory concern establishing conditions on a graph $G$ which guarantee that $G$ must contain some particular spanning structure. A classical example of this is Dirac's theorem \cite{Dirac}:

\begin{theorem}[Dirac]\label{dirac:thm}
Every graph on $n$ vertices with minimum degree at least $n/2$ has a Hamilton cycle.
\end{theorem}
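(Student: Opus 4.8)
The plan is to run the classical longest-path rotation argument. First I would fix a longest path $P = v_0 v_1 \cdots v_k$ in $G$. By maximality, every neighbour of the endpoint $v_0$, and likewise of $v_k$, must lie on $P$: otherwise $P$ could be lengthened at one of its ends, contradicting its choice.

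Next I would produce an index $i$ with $0 \le i \le k-1$ for which both $v_0 v_{i+1}$ and $v_i v_k$ are edges of $G$. Put $A = \{\, i : v_0 v_{i+1} \in E(G) \,\}$ and $B = \{\, i : v_i v_k \in E(G) \,\}$. Since all neighbours of $v_0$ lie among $v_1, \ldots, v_k$ we have $|A| = d(v_0) \ge n/2$, and similarly $|B| = d(v_k) \ge n/2$; moreover $A$ and $B$ are both contained in $\{0, 1, \ldots, k-1\}$, a set of size $k \le n-1$. Were $A$ and $B$ disjoint we would obtain $n-1 \ge k \ge |A| + |B| \ge n$, which is absurd, so we may choose $i \in A \cap B$. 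For such an $i$ the two chords $v_0 v_{i+1}$ and $v_i v_k$ together with the path-segments $v_0 v_1 \cdots v_i$ and $v_{i+1} v_{i+2} \cdots v_k$ form a cycle $C$ whose vertex set is exactly $\{v_0, \ldots, v_k\}$.

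Finally I would argue that $C$ spans $G$. A graph of minimum degree at least $n/2$ is connected: two non-adjacent vertices have neighbourhoods of total size at least $n$ lying inside an $(n-2)$-set, forcing a common neighbour. Hence if $V(C) \ne V(G)$ there is an edge joining some $w \notin V(C)$ to a vertex of $C$; splicing $w$ onto $C$ along this edge and traversing the cycle yields a path on $k+2$ vertices, contradicting the maximality of $P$. Therefore $V(C) = V(G)$ and $C$ is a Hamilton cycle.

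The crux is the middle step: the pigeonhole argument supplying the pair of "crossing" chords $v_0 v_{i+1}$ and $v_i v_k$, which is precisely what converts a longest path into a cycle of the same order. Everything around it — exploiting the maximality of $P$ at the ends, and the easy connectivity bound — is routine bookkeeping once that observation is in place.
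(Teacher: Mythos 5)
Your proof is correct: it is the standard longest-path argument with the pigeonhole/crossing-chords step, which is exactly Dirac's classical proof. The paper does not prove this theorem itself (it is quoted from the cited reference as background), so there is nothing to compare beyond noting that your argument is the canonical one and is sound.
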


Demetrovics, Katona, and Sali \cite{DKS} proved an extension of this in which there is a second graph $H$ on the same vertex set as $G$, and we are required to find a Hamilton cycle in $G$ which satisfies the condition that no two edges of it form an ``alternating cycle'' with two edges of $H$. More precisely they proved:

\begin{theorem}[Demetrovics, Katona, and Sali]\label{dks:thm}
Let $G$ and $H$ be graphs with $V(G)=V(H)$ and $E(G)\cap E(H)=\emptyset$. 
Let $|V(G)|=n$, $r$ be the minimum degree of $G$, and $s$ be the maximum degree of $H$. Provided that
\[
2r-8s^2-s-1>n
\]
there is a Hamilton cycle in $G$ such that if $(a,b)$ and $(c,d)$ are both edges
of the cycle, then $(b,c),(d,a)$  are not both edges of $H$.
\end{theorem}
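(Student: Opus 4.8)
The natural approach is to run Dirac's ``longest path'' argument (the short proof of Theorem~\ref{dirac:thm}) while carrying the forbidden-configuration constraint along with it. Call a path or cycle $P$ in $G$ \emph{good} if there are no four distinct vertices $a,b,c,d$ with $ab,cd\in E(P)$ and $bc,da\in E(H)$; since the edges of $P$ and of $H$ are unordered, this just says that no two edges of $P$ and two edges of $H$ together form an alternating $4$-cycle. The hypothesis $2r-8s^{2}-s-1>n$ forces $2r>n$, so $G$ is connected, and a single edge of $G$ is (vacuously) a good path. Let $P=v_1v_2\cdots v_k$ be a good path in $G$ of maximum length; I aim to show $k=n$ and that $P$ closes up into a good Hamilton cycle.

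The one estimate behind everything is the following. Let $Q$ be a good path with an endpoint $w$, and add to $Q$ a new edge $\{w,z\}$ at $w$. The result fails to be good only if $\{w,z\}$, some edge $\{x,y\}$ of $Q$, and two edges of $H$ form an alternating $4$-cycle; tracing that $4$-cycle, one of $x,y$ must be an $H$-neighbour of $w$ and the other an $H$-neighbour of $z$. Hence, with $w$ fixed, the number of ``bad'' $z$ is at most $2s^{2}$: pick $x\in N_H(w)$ (at most $s$ ways), a path-neighbour $y$ of $x$ (at most $2$), then $z\in N_H(y)$ (at most $s$); and the same bound holds with the roles of $w$ and $z$ swapped. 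Two consequences. First, \emph{non-extendability}: extending $P$ at $v_k$ (or $v_1$) to a longer good path would contradict maximality, so $v_k$ and $v_1$ each have at most $2s^{2}$ $G$-neighbours outside $V(P)$, hence at least $r-2s^{2}$ inside it. Second, a rotation of $P$ at a $G$-neighbour $v_i$ of $v_k$ (to the path $v_1\cdots v_iv_kv_{k-1}\cdots v_{i+1}$) changes the edge set only by deleting $\{v_i,v_{i+1}\}$ and adding $\{v_i,v_k\}$, so it preserves goodness for all but at most $2s^{2}$ values of $i$; similarly, adding a closing edge at the two ends of a good path destroys goodness for at most $2s^{2}$ choices of the relevant index.

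Now imitate Dirac. Look for an index $i$ with $v_i\in N_G(v_k)$ and $v_{i+1}\in N_G(v_1)$; for such an $i$, rotating $P$ at $v_i$ and then adding the edge $v_1v_{i+1}$ produces a cycle through every vertex of $P$. By non-extendability there are at least $(r-2s^{2})+(r-2s^{2})-k=2r-4s^{2}-k$ candidate indices (up to boundedly many boundary exceptions); of these, at most $2s^{2}$ give a non-good rotation, at most $2s^{2}$ a non-good closure, and at most $s$ more make the two new edges themselves the $C$-edges of an alternating $4$-cycle (the remaining $4$-cycle of this last type is excluded because $E(G)\cap E(H)=\emptyset$). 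So as long as
\[
2r-4s^{2}-k>2s^{2}+2s^{2}+s+O(1),\qquad\text{equivalently}\qquad 2r-8s^{2}-s-1>k,
\]
there is a good cycle $C$ with $V(C)=V(P)$; since $k\le n$, the hypothesis guarantees this. If $k=n$ this is the required good Hamilton cycle. If instead $k<n$, pick $z\notin V(C)$: it has at least $r-(n-1-k)=r-n+1+k\ge 2r-n-2s^{2}+2>2s^{2}$ neighbours in $V(C)$ (using non-extendability once more, which gives $k\ge r-2s^{2}+1$), and by the symmetric form of the estimate at most $2s^{2}$ of those neighbours $w$ are bad, so for some $w$ we may delete a suitable $C$-edge at $w$ and add $\{z,w\}$ to obtain a good path on $k+1$ vertices, contradicting maximality. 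Hence $k=n$, completing the proof.

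I expect the only real work to be the bookkeeping in the second and third paragraphs: verifying that \emph{every} way a newly added edge can complete a forbidden configuration is of the stated ``one end of an old path-edge is an $H$-neighbour of one end of the new edge, the other end of the other'' shape, so that the count is genuinely $O(s^{2})$ per step (counting the two orientations of the alternating $4$-cycle separately gives $4s^{2}$ per step and is presumably the source of the $8s^{2}$ in the statement), and then checking that the $O(1)$ boundary losses and the ``$+s$'' term for the two-new-edge interaction really do fit inside the slack $8s^{2}+s+1$. Once that estimate is nailed down, the rest is exactly Dirac's argument.
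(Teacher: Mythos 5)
The paper does not prove Theorem~\ref{dks:thm}: it is quoted from \cite{DKS} without proof, so there is nothing internal to compare your argument against. On its own terms your proof is correct, and it is the natural P\'osa-rotation adaptation of Dirac's argument (which is also the spirit of the original proof in \cite{DKS}). The one estimate you isolate is right and is the whole story: for a fixed vertex $w$ and a fixed edge set in which every vertex has at most two incident edges, the set of $z$ for which $\{w,z\}$ completes a forbidden alternating $4$-cycle with an old edge is contained in $\bigcup_{u\in N_H(w)}\bigcup_{v:\,uv\in E_0}N_H(v)$, of size at most $s\cdot 2\cdot s=2s^2$, and this already covers both orientations of the $4$-cycle since $u$ ranges over all of $N_H(w)$. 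The bookkeeping then closes exactly: $|I\cap J|\ge 2(r-2s^2)-(k-1)$ from non-extendability at the two ends, minus $2s^2$ for bad rotations, $2s^2$ for bad closures, and $s$ for the configuration using both new edges (the other orientation of that one being killed by $E(G)\cap E(H)=\emptyset$), which is positive precisely when $2r-8s^2-s+1>k$, implied by the hypothesis since $k\le n$. Your closing speculation about the provenance of the $8s^2$ is the only slip, and it is harmless: it does not come from double-counting orientations (you correctly avoid that) but from four separate losses of $2s^2$ -- non-extendability at each endpoint, the rotation edge, and the closing edge.
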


We mention briefly that this theorem was proved with an application involving pairing up sets in mind. In this application, the graph $G$ has as vertices the $r$-element subsets of $[m]$ with disjoint subsets being adjacent. By a suitable choice of $H$ it was shown that the $r$-element subsets of $[m]$ can be paired up into disjoint pairs with distinct pairs being significantly different in some natural quantitative sense. See \cite{DKS} and \cite{K} for more details.

Some further results on finding Hamilton cycles in graphs under constraints of this kind were proved and some questions were raised in \cite{K}. The aim of this paper is to answer, completely or partially, some of these questions, and to give some further results, questions and conjectures which may lead towards a more general theory of problems of this kind.

In Section 2 we consider the following question and related problems. 

\begin{q}\label{regularmatch:q}
Let $A,B$ be fixed $n$-sets and let $G,H$ be bipartite graphs on $A\cup B$ with bipartition $(A,B)$. Further, suppose that $G$ is $r$-regular, $H$ is $s$-regular and $G$ and $H$ have disjoint edge sets. For which $n,r,s$ are we guaranteed that there exists a perfect matching in $G$ in which no two edges of the matching form a 4-cycle with two edges of $H$.
\end{q}

Answers to this can be thought of as bipartite analogues of Theorem \ref{dks:thm}. The replacement of the Hamilton cycle in a graph of Theorem \ref{dks:thm} with a perfect matching in a bipartite graph is natural particularly given the pairing up sets applications from \cite{DKS} and \cite{K}. We give conditions on $r$ and $s$ for a perfect matching of this form to exist. In the case that $G$ and $H$ are bipartite complements (in the sense that they are edge disjoint and their union is $K_{n,n}$) we are able to bound the smallest $s$ for which a suitable perfect matching is guaranteed to exist between $c_1n^{1/2}$ and $c_2n^{3/4}$. 

Section 3 concerns generalisations of the following question from \cite{K}. 

\begin{q}\label{F:q}
What is the smallest family $\F\subseteq[n]^{(4)}$ with the property that every Hamilton cycle in $K_n$ contains a pair of edges whose union is an element of $\F$.
\end{q}

In the general question $\F$ is a family of $r$-sets and we require that our Hamilton cycle does not have a set of intervals of given lengths whose union is an element of $\F$.

To describe this generalisation more precisely, let $\F\subseteq[n]^{(r)}$, and $\x=(x_1,\dots,x_k)$ with $x_i$ positive integers and $\sum x_i=r$. We say that a Hamilton cycle in $G$ is $\x$-acceptable for $\F$ if we do not have intervals of vertices of lengths $x_1,\dots,x_k$ in the cycle whose union is an element of $\F$. We are interested in how small $|\F|$ can be if there is no $\x$-acceptable Hamilton cycle for $\F$.

Although this question is implicitly present in \cite{K}, no general results or conjectures are made there. We give upper and lower bounds for the smallest such $|\F|$, each of the form $c n^{r-k}$, and conjecture that our general upper bound is asymptotically tight. We prove results on a number of special cases including improved bounds when $\x=(2,2)$, an asymptotic determination of the extremal function when $\x=(2,\underbrace{1,\dots,1}_{r-2})$, and an answer to a question from \cite{K} on the $\x=(3,1)$ case.

In the final section we suggest a number of further questions and directions for study.

We note that a more general context for expressing this kind of problem is the notion of finding structures under constraints which forbid certain sets of edges all being used. We are given a graph $G$ and a family of sets of edges.
Under what conditions on $G$ and this family  can we guarantee the existence of some structure (typically a Hamilton cycle or a perfect matching) the edge set of which does not contain any member of the forbidden family. For example, the conditions of Theorem \ref{dks:thm} can be phrased as: for every pair of edges $(a,b),(c,d)\in E(G)$ for which $(b,c),(d,a)\in E(H)$ we cannot use both $(a,b)$ and $(c,d)$ in our Hamilton cycle. Both the constraints coming from a graph looked at in Section 2 and the constraints coming from a set system considered in Section 3 are also of this form. It may be interesting to consider different ways of expressing constraints of this kind in addition to the two considered here. 

Finally we remark that although the case that $G$ is a complete graph considered as \ref{F:q} looks rather special, we feel this may be a potentially good first step towards understanding the general behaviour. Another justification for this special case is that it naturally generalises some classical graph theoretic results. Dirac's theorem for instance can be stated (somewhat perversely) as: ``If $F\subseteq V^{(2)}$ is a set of forbidden edges then provided that no vertex is incident with more than $n/2$ elements of $F$ the graph $K_n$ contains a Hamilton cycle which does not contain any element of $F$.'' In our problem we replace the condition that certain edges are not allowed to be used with the condition that none of a certain family of \emph{sets} of edges is allowed to be all used .

\section{Matchings with Constraints given by a graph}

Throughout this section, by bipartite graph we mean a graph on vertex set $V=A\cup B$ with bipartition $(A,B)$ and $|A|=|B|=n$. In other words when we speak of several bipartite graphs they always have a single fixed bipartition. If $G$ and $H$ are bipartite graphs we say that a perfect matching $M$ in $G$ is \emph{acceptable} for $H$ if no two edges of $M$ form a 4-cycle with two edges of $H$. The general question is:

\begin{q}\label{match:q}
Under what conditions on the degrees of $G$ and $H$ can we guarantee the existence of a perfect matching in $G$ which is acceptable for $H$?
\end{q}

It is natural to consider the case that $G$ and $H$ are regular graphs. It will be convenient to assume further that the edge sets of $G$ and $H$ are disjoint. This brings us to Question \ref{regularmatch:q} from the introduction.

We have the following positive result which applies when $r$ is reasonably large.

\begin{theorem}\label{match-deg:thm}
Let $G$ be an $r$-regular bipartite graph and $H$ be an $s$-regular bipartite graph with $E(G)\cap E(H)=\emptyset$. Provided that $r>\frac{n}{2}+s^2$ there exists a perfect matching in $G$ which is acceptable for $H$.
\end{theorem}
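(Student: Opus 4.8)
The plan is to find an acceptable matching greedily, one edge at a time, maintaining a partial matching that can always be extended. I would build the matching $M = \{a_1b_1, a_2b_2, \dots\}$ edge by edge as follows. Suppose we have already chosen a partial matching on $\{a_1, \dots, a_k\} \subseteq A$ and $\{b_1, \dots, b_k\} \subseteq B$ which is acceptable for $H$ (no two of its edges form a $4$-cycle with two $H$-edges), and $k < n$. Pick any unmatched vertex $a \in A$. I want to choose an unmatched neighbour $b$ of $a$ in $G$ so that adding $ab$ keeps the matching acceptable. The edge $ab$ creates a forbidden configuration with some existing edge $a_ib_i$ exactly when $\{ab_i, a_ib\} \subseteq E(H)$, i.e. when $b \in N_H(a_i)$ and $a_i \in N_H(b)$ — equivalently (since we just need to forbid such $b$), a "bad" choice of $b$ is one that lies in $N_H(a_i)$ for some $i$ with $ab_i \in E(H)$... more simply, $b$ is bad for the pair involving $a_ib_i$ iff $b_i \in N_H(a)$ and $a_i \in N_H(b)$.

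The counting then goes: the number of unmatched vertices $b \in B$ available as $G$-neighbours of $a$ is at least $r - k$ (since $a$ has $r$ neighbours in $G$ and at most $k$ of them are matched). Among these, I must discard those $b$ for which there exists a matched pair $a_ib_i$ with $ab_i \in E(H)$ and $a_ib \in E(H)$. The vertices $b_i$ with $ab_i \in E(H)$ number at most $s$ (since $H$ is $s$-regular at $a$); for each such $b_i$, the partner $a_i$ has at most $s$ neighbours in $H$, so it rules out at most $s$ choices of $b$. Hence at most $s^2$ unmatched $b$'s are forbidden. So provided $r - k > s^2$, i.e. for all $k \le n-1$ it suffices that $r - (n-1) > s^2$, we can always extend; actually being slightly more careful with matched-versus-unmatched bookkeeping (the $s^2$ bad vertices might overlap the $k$ already-matched ones) one gets the stated bound $r > n/2 + s^2$. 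Let me re-examine: we need $(r-k) - s^2 \ge 1$ in the worst case $k = n-1$, giving $r > n - 1 + s^2$; to get the weaker hypothesis $r > n/2 + s^2$ one should instead invoke a defect-Hall / König-type argument rather than pure greed.

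So the cleaner approach: let $H'$ be the bipartite "conflict-restricted" subgraph of $G$ — but conflicts depend on the matching, so that does not quite work directly. Instead I would use Hall's theorem on an auxiliary structure, or argue by contradiction from a maximum acceptable matching: take a maximum acceptable matching $M$ and suppose $|M| = k < n$; pick unmatched $a \in A$, and show that the set of $b \in B$ reachable from $a$ by an $M$-alternating path whose edges can be "rotated" while preserving acceptability is large, forcing an unmatched such $b$ and hence an augmenting alternating path — the factor $n/2$ should emerge from an expansion estimate on alternating paths of length $3$, where each rotation step loses at most $s^2$ from the degree surplus $r - s^2 > n/2$, and Hall-type expansion from a set of size $> n/2$ on both sides forces a common neighbour.

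The main obstacle I expect is precisely this last point: controlling how the acceptability constraint interacts with alternating-path augmentation. A single greedy pass only yields $r > n - 1 + s^2$; squeezing down to $r > n/2 + s^2$ requires showing that when $M$ is maximum one can still perform an augmentation, and the acceptability condition must be shown not to block the rotation — each swap of an edge $a_ib_i$ for $a_ib$ (or $ab_i$) along the alternating path could create new forbidden $4$-cycles with $H$, and one must check these are few enough (again $O(s^2)$ per step, or globally bounded) that a set of size exceeding $n/2$ in $A$ has a common $G$-neighbour with the corresponding set in $B$ that is also "conflict-free." Handling this bookkeeping cleanly — essentially a defect version of König's theorem applied to $G$ minus an $s^2$-bounded-degree conflict graph — is the crux; the degree counts themselves ($s$ choices for $b_i$, then $s$ choices for the conflicting $b$) are routine.
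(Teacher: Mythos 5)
There is a genuine gap: neither of your two sketches actually reaches the stated bound, and the second one stalls exactly where you say it does. Your greedy pass gives only $r>n-1+s^2$, as you note. Your fallback --- take a \emph{maximum acceptable partial matching} and augment along alternating paths --- is the natural thing to try, but it is precisely the approach that does not obviously work: each swap along an alternating path can create new forbidden $4$-cycles with edges of $M$ far from the path, the path may be long, and acceptability is not monotone under augmentation, so there is no clean ``defect K\H{o}nig'' statement to invoke. You flag this as ``the crux'' and leave it unresolved, so the proposal does not constitute a proof.

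The idea you are missing is to run the extremal argument on $H$ rather than on the matching. Since $G$ is regular it has \emph{some} perfect matching; delete edges of $H$ one at a time until some perfect matching $M$ of $G$ becomes acceptable for the remaining graph $H'$, and let $e$ be the last edge deleted. Then $M$ fails to be acceptable for $H'\cup e$ only via a single bad $4$-cycle through $e$, involving (after relabelling) the matching edges $a_nb_n$ and $a_{n-1}b_{n-1}$; in particular $M\setminus\{a_nb_n\}$ is still acceptable for $H'\cup e$. Now only one local rotation is needed: one seeks an index $t$ with $a_nb_t,a_tb_n\in E(G)$ such that replacing $a_nb_n,a_tb_t$ by $a_nb_t,a_tb_n$ creates no new bad $4$-cycle. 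The choices of $t$ ruled out on the $a_n$ side form a set of size at most $s^2$ (your $s\times s$ count, applied to the second-neighbourhood $\Gamma_H(\{a_i:b_i\in\Gamma_H(a_n)\})$), and likewise on the $b_n$ side; so each of the two index sets of admissible $t$ has size at least $r-s^2>n/2$, and they must intersect. This is where $n/2$ genuinely enters --- two constraints, each satisfied by more than half the indices --- not from an expansion estimate along alternating paths. The resulting $M'$ is acceptable for $H'\cup e$, contradicting the choice of $e$. Your local $s^2$ count and your instinct that ``two sets of size exceeding $n/2$ must meet'' are both correct ingredients, but without the deletion-of-$H$-edges framework that reduces the problem to a single rotation of a \emph{perfect} matching, they cannot be assembled into a proof.
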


\begin{proof}
Since $G$ is regular it certainly contains at least one perfect matching. Suppose, for a contradiction, that $G$ does not contain a perfect matching which is acceptable for $H$. Remove edges from $H$ one by one until one of the perfect matchings in $G$ becomes acceptable. Let $H'$ be the subgraph we are left with, and $e$ be the last edge removed. Let $M$ be any perfect matching in $G$ which is acceptable for $H'$. Label the vertices so that $A=\{a_1,\dots,a_n\}$, $B=\{b_1,\dots,b_n\}$ with $M=\{(a_i,b_i): 1\leq i\leq n\}$ and $e=(a_n,b_{n-1})$.

Adding $e$ to $H'$ makes $M$ unacceptable but this can only be because of the $4$-cycle $a_n,b_n,a_{n-1},b_{n-1}$. Hence, the matching $M\setminus (a_n,b_n)$ of $n-1$ edges is acceptable for $H'\cup e$. Let $Y=\Gamma_H(a_n)\subseteq B$ and $X=\Gamma_H(b_n)\subseteq A$. Let $X'=\{b_i: a_i\in X\}$, $Y'=\{a_i:b_i\in Y\}$. Finally, let $X''=\Gamma_H(X')\subseteq A$ and $Y''=\Gamma_H(Y')\subseteq B$. Since $H$ is $s$-regular $|X''|, |Y''|\leq s^2$. Since $deg_G(a_n)-|Y''|\geq r-s^2>n/2$ and similarly $deg_G(b_n)-|X''|\geq r-s^2>n/2$ we can find $t$ so that $(a_n,b_t),(a_t,b_n)\in E(G)$, $a_t\not\in X''$, $b_t\not\in Y''$. 

We claim that $M'=M\setminus \{(a_n,b_n),(a_t,b_t)\}\cup\{(a_t,b_n),(a_n,b_t)\}$ is acceptable for $H'\cup e$. This will contradict the definition of $H'$ and $e$. 

Suppose that we have a bad $4$-cycle containing two edges of $M'$ and two edges of $H'\cup e$. Since $M\setminus (a_n,b_n)$ is acceptable for $H'\cup e$ our bad cycle must contain at least one edge of $(a_t,b_n),(a_n,b_t)$. Further, since $a_n,b_n\not\in E(H)$ (as $E(G)\cap E(H)=\emptyset$) it must contain exactly one of the edges. Suppose that our bad cycle is $a_n,b_t,a_s,b_s$ with $(a_n,b_s),(a_s,b_t)\in E(H)$. This means that $b_s\in Y$, $a_s\in Y'$ and $b_t\in Y''$, contradicting our choice of $t$. Similarly, by the definition of $X,X'$ and $X''$, we cannot have that our bad cycle is $b_n,a_t,b_s,a_s$. It follows that $M'$ is acceptable for $H'\cup e$ and this contradiction completes the proof.
\end{proof}

A special case of this result gives a condition for an acceptable matching when $E(H)=E(K_{n,n})\setminus E(G)$. To express this we introduce some notation. Suppose that $G$ is a bipartite graph. We write $\overline{G}$ for the bipartite graph with edge set $\{ab: a\in A, b\in B, ab\not\in E(G)\}$. Let $b(n)$ be the minimum number $s$ such that there exists an $s$-regular bipartite graph $H$ for which there is no perfect matching in $\overline{H}$ which is acceptable for $H$.

\begin{cor}\label{match-comp-lb:cor}
For all $n$ we have that
\[
b(n)>\frac{1}{2}\left(\sqrt{2n+1}-1\right)=(\sqrt{2}+o(1))n^{1/2}.
\]
\end{cor}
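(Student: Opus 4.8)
The plan is to deduce this directly from Theorem~\ref{match-deg:thm} by setting $G=\overline{H}$. If $H$ is $s$-regular then every vertex of $\overline{H}$ has degree $n-s$, and $E(\overline{H})\cap E(H)=\emptyset$ by the definition of $\overline{H}$; hence Theorem~\ref{match-deg:thm} applies to the pair $(\overline{H},H)$ with $r=n-s$. Consequently, whenever $n-s>\frac{n}{2}+s^{2}$ there is a perfect matching in $\overline{H}$ which is acceptable for $H$, and so such an $H$ is \emph{not} one of the graphs appearing in the definition of $b(n)$.

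Next I would turn $n-s>\frac{n}{2}+s^{2}$ into an explicit bound on $s$. Multiplying out, this says $2s^{2}+2s-n<0$, and since the parabola $2x^{2}+2x-n$ is negative precisely between its two roots, for $s\ge 0$ the inequality is equivalent to $s<\frac{1}{2}\left(\sqrt{2n+1}-1\right)$ (the positive root). So any $s$-regular bipartite $H$ with $s<\frac{1}{2}(\sqrt{2n+1}-1)$ has an acceptable perfect matching in $\overline{H}$; equivalently, every $H$ witnessing the definition of $b(n)$ satisfies $s\ge\frac{1}{2}(\sqrt{2n+1}-1)$, whence $b(n)\ge\frac{1}{2}(\sqrt{2n+1}-1)$. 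Because $b(n)$ is a positive integer, this already gives the strict inequality $b(n)>\frac{1}{2}(\sqrt{2n+1}-1)$ whenever the right-hand side is not an integer, and expanding the square root yields the stated asymptotic form.

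The only delicate point — and the one I would expect to take real work — is the boundary case in which $\frac{1}{2}(\sqrt{2n+1}-1)$ happens to be an integer $s_{0}$, i.e. $n=2s_{0}(s_{0}+1)$. Then the hypothesis of Theorem~\ref{match-deg:thm} degenerates to the equality $r=\frac{n}{2}+s_{0}^{2}$, so the theorem does not literally exclude an $s_{0}$-regular witness and these $n$ must be handled separately. The route I would take is to revisit the final step of the proof of Theorem~\ref{match-deg:thm}: there one needs an index $t\neq n$ with $(a_{n},b_{t}),(a_{t},b_{n})\in E(G)$, $a_{t}\notin X''$ and $b_{t}\notin Y''$, and for $G=\overline{H}$ the forbidden values of $t$ are precisely those with $b_{t}\in\Gamma_{H}(a_{n})\cup Y''$ or $a_{t}\in\Gamma_{H}(b_{n})\cup X''$, each of these two sets having size at most $s_{0}+s_{0}^{2}=n/2$; one then wants a small sharpening — for instance exploiting that the $s_{0}$ neighbourhoods comprising $X''$ (and $Y''$) need not be disjoint, or a short direct argument such as the one available by hand when $n=4$, $s_{0}=1$ — to guarantee a common admissible $t$. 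Producing such a sharpening, thereby closing the gap between ``$\ge$'' and ``$>$'' at these exceptional $n$, is the main obstacle; everything else is a one-line substitution into Theorem~\ref{match-deg:thm}.
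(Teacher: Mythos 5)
Your argument is exactly the paper's proof: substitute $G=\overline{H}$, which is $(n-s)$-regular, into Theorem~\ref{match-deg:thm}, and observe that $n-s>\frac{n}{2}+s^2$ is equivalent to $s<\frac{1}{2}\left(\sqrt{2n+1}-1\right)$. The boundary case you flag as the main obstacle (when $n=2s_0(s_0+1)$, so that the threshold is an integer) is not treated in the paper at all --- its proof as written only yields $b(n)\ge\frac{1}{2}\left(\sqrt{2n+1}-1\right)$ at those exceptional $n$ and no sharpening of Theorem~\ref{match-deg:thm} is attempted --- so none of the extra work you outline is needed to reproduce the paper's argument, and the asymptotic content of the corollary is unaffected either way. (As a small aside, expanding the square root actually gives $\bigl(\tfrac{1}{\sqrt{2}}+o(1)\bigr)n^{1/2}$ rather than the $(\sqrt{2}+o(1))n^{1/2}$ printed in the statement.)
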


\begin{proof}
If $H$ is an $s$-regular graph then $G=\overline{H}$ is an $n-s$ regular graph. Theorem \ref{match-deg:thm} shows that
we have a perfect matching in $G$ which is acceptable for $H$ if $n-s>\frac{n}{2}+s^2$. That is if $s<\frac{1}{2}\left(\sqrt{2n+1}-1\right)$.
\end{proof}

A construction gives an upper bound for $b(n)$. 

\begin{theorem}\label{match-comp-ub:thm}
For infinitely many $n$ we have that
\[
b(n)<(2+o(1))n^{3/4}.
\]
\end{theorem}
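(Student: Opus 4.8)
The plan is to prove the (formally stronger) bound $b(n)\le(1+o(1))n^{3/4}$ for all sufficiently large $n$; this certainly gives $b(n)<(2+o(1))n^{3/4}$ for infinitely many $n$. I would argue by a first moment computation. Put $p=n^{-1/4}$ and let $H_0$ be the random bipartite graph on $A\cup B$ in which each of the $n^2$ possible edges is included independently with probability $p$. For a bijection $\phi\colon A\to B$ write $M_\phi=\{a\,\phi(a):a\in A\}$. Then $M_\phi$ is a perfect matching of $\overline{H_0}$ exactly when $H_0$ misses all $n$ ``diagonal'' cells $a\,\phi(a)$, and, given this, $M_\phi$ is acceptable for $H_0$ exactly when for every unordered pair $\{a,a'\}$ the graph $H_0$ misses at least one of the two cells $a'\phi(a)$, $a\,\phi(a')$; as $\{a,a'\}$ ranges over all pairs these $2\binom n2$ cells exhaust the non-diagonal cells, each appearing once. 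Hence, by independence,
\[
\Pr\bigl[M_\phi\text{ is an acceptable perfect matching of }\overline{H_0}\bigr]=(1-p)^n(1-p^2)^{\binom n2}\le e^{-p^2\binom n2}.
\]
Summing over the $n!$ bijections, the expected number of acceptable perfect matchings of $\overline{H_0}$ is at most $n!\,e^{-p^2\binom n2}$, and since $p^2\binom n2=(1+o(1))\tfrac12 n^{3/2}$ dominates $\log(n!)$ this tends to $0$. So with high probability $\overline{H_0}$ has no acceptable perfect matching for $H_0$; with high probability we also have $\Delta(H_0)\le(1+o(1))pn$ and $\delta(H_0)\ge(1-o(1))pn$.

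I would then regularise. Take $s=\lceil(1+o(1))pn\rceil\ge\Delta(H_0)$ and extend $H_0$ to an $s$-regular bipartite graph $H\supseteq H_0$ by adding a subgraph of $\overline{H_0}$ with degree sequence $\bigl(s-\deg_{H_0}(v)\bigr)_v$. This is possible because $\overline{H_0}$ is a near-regular bipartite graph of minimum degree $(1-o(1))n$, whereas the required deficiency degrees are $o(n)$ and, having total $\Theta(pn^2)$, cannot be concentrated on a set of $o(n)$ vertices — a routine check of the deficiency form of Hall's condition (or a max-flow argument) then applies. Finally $E(H_0)\subseteq E(H)$, so any $4$-cycle witnessing unacceptability for $H_0$ witnesses it for $H$; since $\overline H\subseteq\overline{H_0}$, every perfect matching of $\overline H$ is a perfect matching of $\overline{H_0}$ and hence unacceptable for $H$. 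Therefore $b(n)\le s=(1+o(1))n^{3/4}$.

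The step needing the most care is the regularisation — checking that $H_0$ can always be completed to an \emph{exactly} $s$-regular graph using only non-edges of $H_0$, uniformly over the random degree sequence of $H_0$. Working directly with the uniform random $s$-regular bipartite graph would remove this step but create a worse one: the $\binom n2$ ``miss one cell of each pair'' events are then only approximately independent and, with degree growing like $n^{3/4}$, the accumulated dependency error in the first moment bound must be controlled by switchings or by sandwiching between binomial models — so the binomial-plus-completion route looks cleanest. (An explicit construction attaining the precise constant $2$ is also available: take a product/grid graph in which $H$ joins vertices agreeing on a prescribed pair of coordinate blocks, of total size $(2+o(1))n^{3/4}$, and reduce acceptability of $\overline H$ to the non-existence of a coordinate-wise fixed-point-free permutation of a small grid avoiding certain mutual pairs; but this forces a restriction to special $n$ and a more delicate combinatorial lemma, which the probabilistic argument bypasses.)
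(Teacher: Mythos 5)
Your approach is genuinely different from the paper's. The paper gives an explicit construction: it partitions each side into $\lfloor m^{1/3}\rfloor$ blocks of size $m$, places complete bipartite graphs on the diagonal block-pairs and copies of Alon's near-optimal expander on the off-diagonal pairs; the complete diagonal blocks force any perfect matching of $\overline{H}$ to scatter two blocks' worth of matched vertices, and the expander then supplies the bad $4$-cycle. That construction is what restricts the result to special $n$ and costs the factor $2$ (degree $m$ from the diagonal plus $(1+o(1))m$ from the expanders). Your first-moment computation is correct and clean: for a fixed bijection $\phi$ the $n^2$ cells split into the $n$ diagonal cells and $\binom{n}{2}$ disjoint transversal pairs, so the probability that $M_\phi$ is an acceptable perfect matching of $\overline{H_0}$ is exactly $(1-p)^n(1-p^2)^{\binom{n}{2}}$, and with $p=n^{-1/4}$ the exponent $p^2\binom{n}{2}\sim\tfrac12 n^{3/2}$ swamps $\log(n!)$. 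If completed, this would prove the stronger statement $b(n)\le(1+o(1))n^{3/4}$ for all large $n$, which is a real improvement on the theorem as stated.

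The gap is the regularisation, and your justification for it does not hold up as written. First, the heuristic "the deficiencies have total $\Theta(pn^2)$ and cannot be concentrated on $o(n)$ vertices" is wrong on both counts: if $s=(1+o(1))pn$ is taken at or just above $\Delta(H_0)$, the total deficiency is $ns-e(H_0)=o(pn^2)$, and in any case "not concentrated" is not a sufficient condition. Second, the underlying fact you are implicitly leaning on — that a bipartite graph with equal parts and maximum degree at most $s\le n$ can always be completed to an $s$-regular graph by adding edges — is false: take $n=3$, $s=2$, and $G=K_{2,2}$ on $\{a_1,a_2\}\times\{b_1,b_2\}$ with $a_3,b_3$ isolated; the entire deficiency of $4$ must be carried by the single non-edge pair $(a_3,b_3)$. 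So the Gale--Ryser/max-flow feasibility condition for finding a subgraph of $\overline{H_0}$ with prescribed degrees $d_v=s-\deg_{H_0}(v)$ genuinely has to be verified, and with $s=\Delta(H_0)$ the verification is awkward because some $d_v$ vanish. The fix is to build in slack: take $s=\lceil pn+n^{1/2}\rceil$, so that whp every deficiency satisfies $d_v=(1+o(1))n^{1/2}$ (Chernoff gives $|\deg_{H_0}(v)-pn|=O(n^{3/8}\sqrt{\log n})$ for all $v$), while still $s=(1+o(1))n^{3/4}$. The feasibility condition $\sum_{a\in S}d_a\le e_{\overline{H_0}}(S,T)+\sum_{b\in B\setminus T}d_b$ then follows from the sufficient condition $e_{\overline{H_0}}(S,T)\ge\min\bigl(\sum_{a\in S}d_a,\sum_{b\in T}d_b\bigr)$ by a short case split: if $\min(|S|,|T|)\ge\Delta(H_0)+2n^{1/2}$ use $e_{\overline{H_0}}(S,T)\ge|S|(|T|-\Delta(H_0))$; otherwise the smaller side has total demand $O(n^{5/4})$, which is dominated by $\sum_{b\in B\setminus T}d_b=(1-o(1))n^{3/2}$. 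With that lemma in place your argument is complete.
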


The construction we give works for all $n$ of the form $m\lfloor m^{1/3}\rfloor$ where $m=\frac{q^4-1}{q-1}$ with $q$ a prime power. It appears to be surprisingly difficult to deduce anything valid for all values of $n$ from this. The fact that we do not even know whether the function $b(n)$ is non-decreasing in $n$ contributes to this difficulty.

An ingredient of this construction is a bipartite graph with good expansion properties as described in the following result of Alon \cite{Alon}.

\begin{theorem}[Alon \cite{Alon}]\label{alon:thm}
For any integer $d\geq 1$ and $n=\frac{q^{d+1}-1}{q-1}$ with $q$ a prime power, there is an $r$-regular bipartite graph with $r=\frac{q^d-1}{q-1}=(1+o(1))n^{1-1/d}$ such that for all $0<x<n$ and any $X\subseteq A$ with $|X|=x$ we have $|\Gamma(X)|\geq n-\frac{n^{1+1/d}}{x}$.
\end{theorem}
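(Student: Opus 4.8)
The plan is to realise the graph concretely as the point--hyperplane incidence graph of the projective space $\mathrm{PG}(d,q)$, and then to bound neighbourhoods by a short second-moment count exploiting the fact that this incidence structure is a $2$-design. First I would set up the construction: let $A$ be the set of points and $B$ the set of hyperplanes of $\mathrm{PG}(d,q)$, and join a point to each hyperplane containing it. Standard projective counting gives $|A|=|B|=\frac{q^{d+1}-1}{q-1}=n$, each hyperplane carries exactly $r=\frac{q^d-1}{q-1}$ points, and dually each point lies on exactly $r$ hyperplanes, so $G$ is $r$-regular on both sides with $r=(1+o(1))n^{1-1/d}$. The one extra structural fact I need is that any two distinct points lie on exactly $\lambda=\frac{q^{d-1}-1}{q-1}$ common hyperplanes, so that we have a $2$-$(n,r,\lambda)$ design; in particular $r-\lambda=q^{d-1}$ and the design identity $r^2=\lambda(n-1)+r$ holds.

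The heart of the argument is then a Cauchy--Schwarz estimate. Fix $X\subseteq A$ with $|X|=x$ and, for each hyperplane $b\in B$, set $d_b=|X\cap b|$. Counting incidences one point at a time gives $\sum_b d_b=xr$, and counting pairs of points of $X$ through their common hyperplanes gives $\sum_b\binom{d_b}{2}=\binom{x}{2}\lambda$, whence $\sum_b d_b^2=x(x-1)\lambda+xr$. Since $\Gamma(X)=\{b:d_b\ge 1\}$ and the two sums are unchanged by discarding the terms with $d_b=0$, Cauchy--Schwarz over $\Gamma(X)$ gives
\[
(xr)^2=\Big(\sum_b d_b\Big)^2\le|\Gamma(X)|\sum_b d_b^2=|\Gamma(X)|\big(x(x-1)\lambda+xr\big),
\]
so that $|\Gamma(X)|\ge xr^2/\big((x-1)\lambda+r\big)$.

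Finally I would convert this into the stated form. Subtracting from $n$ and using $r^2=\lambda(n-1)+r$ to clear the quadratic term, the numerator collapses neatly to $(r-\lambda)(n-x)$, giving
\[
n-|\Gamma(X)|\le\frac{(r-\lambda)(n-x)}{(x-1)\lambda+r}\le\frac{(r-\lambda)\,n}{x\lambda},
\]
where the last step uses $n-x\le n$ and $(x-1)\lambda+r\ge x\lambda$. Since $r-\lambda=q^{d-1}$ and $\lambda=(1+o(1))q^{d-2}$ we have $(r-\lambda)/\lambda=(1+o(1))q$, and $q=(1+o(1))n^{1/d}$, so the right-hand side is $(1+o(1))n^{1+1/d}/x$, which yields the desired bound $|\Gamma(X)|\ge n-\frac{n^{1+1/d}}{x}$.

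The genuinely substantive input here is the construction itself, together with the smallness of the spectral gap it encodes: the design identity says exactly that the biadjacency matrix $N$ satisfies $NN^{T}=(r-\lambda)I+\lambda J$, so all nontrivial singular values of $N$ equal $\sqrt{r-\lambda}=q^{(d-1)/2}$, which is tiny next to the top value $r$ --- this is precisely why the graph expands, and an equivalent route to the same estimate is to feed this gap into the bipartite expander mixing lemma applied to $X$ and $B\setminus\Gamma(X)$ (between which there are no edges). Granting the projective-geometry input, both the Cauchy--Schwarz count and the final arithmetic are routine; the main care is in pinning down the exact relationship between $q$, $n$, $r$ and $\lambda$ so that the clean bound $n^{1+1/d}/x$ is recovered rather than one that is merely correct up to a constant factor.
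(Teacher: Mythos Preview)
The paper does not give a proof of this statement: it is quoted as a result of Alon and used as a black box in the proof of Theorem~\ref{match-comp-ub:thm}. So there is no ``paper's own proof'' to compare against.

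That said, your sketch is the right one and is essentially Alon's construction: the point--hyperplane incidence graph of $\mathrm{PG}(d,q)$ gives the required $r$-regular bipartite graph, and the expansion follows from the $2$-design structure (equivalently, from the eigenvalue gap $NN^{T}=(r-\lambda)I+\lambda J$). Your Cauchy--Schwarz count is clean and leads correctly to
\[
n-|\Gamma(X)|\le\frac{(r-\lambda)(n-x)}{(x-1)\lambda+r}.
\]
The only point that deserves a little more care is the last step: you pass from this to $(1+o(1))\,n^{1+1/d}/x$ but the theorem as stated asks for the bound $n^{1+1/d}/x$ without the $(1+o(1))$. You flag this yourself, and indeed for $d\ge 2$ one can check directly that $(r-\lambda)/\lambda\le n^{1/d}$ (for instance, $d=2$ gives $(r-\lambda)/\lambda=q$ and $n=q^2+q+1>q^2$; the general case is similar arithmetic), while for $d=1$ the stated bound is vacuous. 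So the exact inequality does go through, but it is worth writing that verification out rather than leaving it as an asymptotic.
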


\begin{proof}[Proof of Theorem \ref{match-comp-ub:thm}]
Let $m=\frac{q^4-1}{q-1}$ and $n=m\lfloor m^{1/3}\rfloor$. We need to construct a regular bipartite graph $H$ of degree $(2+o(1))n^{3/4}$ with the property that no perfect matching in $G=\overline{H}$ is acceptable for $H$. 

We first partition the vertices in $A$ into $k=\lfloor m^{1/3}\rfloor$ parts $A_1,A_2,\dots,A_k$ each of size $m$. Similarly we partition the vertices in $B$ into $k=\lfloor m^{1/3}\rfloor$ parts $B_1,B_2,\dots,B_k$ each of size $m$.

Let $B(m)$ be a bipartite graph with $m$ vertices in each part which satisfies the conditions of the $d=3$ case of Theorem \ref{alon:thm}. This graph is regular of degree $(1+o(1))m^{2/3}$.

We form $H$ by putting a copy of $K_{m,m}$ between each pair $A_i,B_i$ for $1\leq i\leq k$ and a copy of $B(m)$ between each pair $A_i,B_j$ with $i\not=j$. The graph $H$ is clearly regular of degree $m+(k-1)(1+o(1))m^{2/3}=(2+o(1))n^{3/4}$. 

Suppose that $M$ is any perfect matching in $G$. Let $Y=\{b\in B: (a,b)\in M \text{ for some } a\in A_1\}$ and $X=\{a\in B: (a,b)\in M \text{ for some } b\in B_1\}$. Since $|X|=|Y|=m$ and $X\cap A_1=Y\cap B_1=\emptyset$ there is some $s$ with $|X\cap A_s|\geq m/(k-1)>m^{2/3}$ and some $t$ with $|Y\cap B_t|\geq m/(k-1)>m^{2/3}$. Now, if $s=t$ we have a $4$-cycle consisting of an edge of $M$ between $A_1$ and $B_t$, an edge of $M$ between $A_s$ and $B_1$, and two edges from copies of $K_{m,m}$. If $s\not=t$ then the construction of $B(m)$ implies that there is some edge of $H$ between $X\cap A_s$ and $Y\cap B_t$. If not then $X\cap A_s$ has less than $m-|Y\cap B_t|\leq m-m^{2/3}$ neighbours in $B_s$ contradicting the defintion of $B(m)$. It follows that we have a $4$-cycle consisting of this edge, an edge of $M$ between $A_1$ and $B_t$, an edge of $M$ between $A_s$ and $B_1$, and an edge from the copy of $K_{m,m}$ between $A_1$ and $B_1$.

We conclude that $H$ does have the property that no perfect matching in $G$ is acceptable for $H$.
\end{proof}

In fact, if we only require an $H$ with small maximum degree with the property that no perfect matching in $\overline{H}$ is acceptable for $H$ a slightly simpler construction works. Assuming $n$ is of the form $\frac{q^4-1}{q-1}$ start with a copy of $B(n)$ as defined in the proof above. Take subsets $X\subseteq A$, $Y\subseteq B$ with $|X|=|Y|=n^{2/3}+1$ and add all edges between $X$ and $Y$ to form a graph $H$. Since there is an edge of $B(n)$ between any two subsets of size $n^{2/3}+1$, there is an edge of $B(n)$ between the vertices matched to $X$ and the vertices matched to $Y$ for any perfect matching in $\overline{H}$. It follows that no perfect matching in $\overline{H}$ is acceptable for $H$. The graph $H$ has maximum degree at most $2n^{2/3}+1$ although it is not regular.

Turning now to the more general question in which $G$ and $H$ are not required to be complementary Theorem \ref{match-deg:thm} is rather weak since the condition only holds when the degree of $G$ is quite large. For smaller degrees of $G$ we have a negative result obtained by taking copies of the graph constructed in the proof of Theorem \ref{match-comp-ub:thm}.

\begin{cor}
For any $r$ there exists, for infinitely many $n$, an $r$-regular graph $G$ and a $(2+o(1))r^{3/4}$-regular graph $H$ with disjoint edge sets and with the property that no perfect matching in $G$ is acceptable for $H$.
\end{cor}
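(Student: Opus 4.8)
The plan is to glue together many vertex-disjoint copies of the extremal example from the proof of Theorem~\ref{match-comp-ub:thm}. Fix a prime power $q$ and let $m$, $H_0$ and $G_0=\overline{H_0}$ be as in that proof, writing $n_0$ for the number of vertices in each part (called $n$ there). So $H_0$ is $s_0$-regular with $s_0=(2+o(1))n_0^{3/4}$, the graph $G_0$ is $r$-regular with $r=n_0-s_0$, the edge sets of $G_0$ and $H_0$ are disjoint, and no perfect matching in $G_0$ is acceptable for $H_0$. Since $r=n_0-s_0=(1+o(1))n_0$ we have $r^{3/4}=(1+o(1))n_0^{3/4}$ and hence $s_0=(2+o(1))r^{3/4}$, the $o(1)$ tending to $0$ as $q\to\infty$. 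As there are infinitely many prime powers, the value $r=n_0-s_0$ ranges (as $q$ varies) over an infinite set of integers, and this is the set of values of $r$ for which we establish the statement.

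Next, for each positive integer $t$ I would let $G$ be the disjoint union of $t$ copies of $G_0$, on a vertex set $A\cup B$ with $|A|=|B|=n:=tn_0$ formed from the parts of the copies, and let $H$ be the disjoint union of the corresponding $t$ copies of $H_0$ on the same vertex set. Then $G$ is $r$-regular and $H$ is $s_0$-regular, and $E(G)\cap E(H)=\emptyset$ because within each copy the edge sets are disjoint and distinct copies are vertex-disjoint. So $G$ and $H$ have the right degrees; it remains only to verify the matching property.

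To do that, let $M$ be any perfect matching in $G$. Since $G$ has no edges joining different copies, every edge of $M$ lies inside a single copy; and since each copy contributes the same number of vertices to $A$ as to $B$, the edges of $M$ lying in a given copy must form a perfect matching of that copy (of $G_0$). By the property of $G_0$ and $H_0$ this restricted matching is not acceptable for the relevant copy of $H_0$, so it contains two edges forming a $4$-cycle with two edges of that copy of $H_0$; all four edges lie in a single copy and hence in $G\cup H$, so $M$ is not acceptable for $H$. Running over $t=1,2,3,\dots$ then gives, for each of our infinitely many values of $r$, infinitely many $n$ with the desired property. The only genuinely new ingredient is this last restriction argument for perfect matchings of a disjoint union of balanced bipartite graphs, which is routine; the one point worth flagging is that ``for any $r$'' here means the infinite (and unbounded) family of values $r=n_0-s_0$, which suffices to show that Theorem~\ref{match-deg:thm} cannot be substantially strengthened for small $r$.
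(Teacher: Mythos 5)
Your proposal is correct and is essentially the paper's own argument: the paper's proof is the one-line instruction to take vertex-disjoint copies of the construction from Theorem~\ref{match-comp-ub:thm}, and you have simply filled in the routine verification that a perfect matching of a disjoint union of balanced bipartite graphs restricts to a perfect matching of each copy (together with the correct observation that the admissible values of $r$ are those of the form $n_0-s_0$ arising from the construction). Nothing further is needed.
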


\begin{proof}
Take vertex disjoint copies of the graphs constructed in the proof of Theorem \ref{match-comp-ub:thm}.
\end{proof}

Positive results in this small degree of $G$ case seem to be harder to prove. Indeed, we can not answer even the following apparently simple question:

\begin{q}\label{match-small:q}
Is there an integer $k$ such that for all sufficiently large $n$, if $G$ is a $k$-regular bipartite graph and $H$ is a 1-regular bipartite graph, we are guaranteed to have a perfect matching in $G$ which is acceptable for $H$?
\end{q}

We remark briefly that the conclusion does not hold if $k=2$ since such a 2-regular $G$ may have only 2 perfect matchings and it is easy to choose a 1-regular $H$ so that neither of them is acceptable (provided that $n\geq7$). However, it may be that even $k=3$ is sufficient for a positive answer.

\section{Constraints given by a set system}

\subsection{Hamilton cycles}

We turn now to constraints described in a different form.

If $\F\subseteq[n]^{(4)}$ and $G$ is a graph with vertex set $[n]$ we say that a Hamilton cycle in $G$ is a $(2,2)$-acceptable Hamilton cycle with respect to $\F$ if we do not have two disjoint edges of the cycle whose union is an element of $\F$. We will mainly be interested in the case when $G=K_n$. This leads us to Question \ref{F:q} from the introduction which can now be rephrased as:

\begin{q}\label{F22:q}
What is the smallest family $\F\subseteq[n]^{(4)}$ with the property that $K_n$ does not contain a $(2,2)$-acceptable Hamilton cycle with respect to $\F$?
\end{q}

A Hamilton cycle in $K_n$ can be thought of as a cyclic ordering of the vertices and we shall represent our Hamilton cycles by such a string of vertices. For a more general question, suppose that $G$ is a graph with vertex set $[n]$, $\F\subseteq[n]^{(r)}$, and $\x=(x_1,\dots,x_k)$ with $x_i$ positive integers and $\sum x_i=r$. We say that a Hamilton cycle $c_1,c_2,c_3,\dots,c_n$ in $G$ is $\x$-acceptable for $\F$ if we do not have $t_1,\dots,t_k$ with
\[
\bigcup_{i=1}^k\{c_{t_i+1},c_{t_i+2},\dots,c_{t_i+x_i}\}\in\F.
\]
(Here and elsewhere we interpret suffices modulo $n$.)

We generalise Question \ref{F22:q} in the obvious way.

\begin{q}\label{Fx:q}
Given $\x$ as above what is the smallest family $\F\subseteq[n]^{(r)}$ with the property that $K_n$ does not contain an $\x$-acceptable Hamilton cycle with respect to $\F$?
\end{q}

We will denote the size of this smallest family $\F$ by $m(\x,n)$.

This question of determining $m(\x,n)$ was essentially raised (with slightly different notation) in \cite{K} where a number of constructions relating to particular cases of it are given. Our aim is to improve the bounds in some of these and other special cases, and also to consider what general results may hold. We will generally be interested in asymptotic results in which $n$ tends to infinity with $\x$ (and hence $r$) being fixed.

A simple averaging argument gives a lower bound on $m(\x,n)$ for any $\x$.

\begin{theorem}\label{mxn-lb:thm}
Given $\x$ as above we have that
\[
m(\x,n)\geq\left(\frac{1}{c(\x)x_1!x_2!\dots x_k!}+o(1)\right)n^{r-k},
\]
where $c(\x)$ is the number of unordered partitions of an $r$-set into $k$ sets of sizes $x_1,x_2,\dots,x_k$ (and in particular does not depend on $n$).
\end{theorem}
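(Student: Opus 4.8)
The plan is a double-counting (averaging) argument. It is convenient to work with \emph{presented} Hamilton cycles, i.e. sequences $c_1 c_2\cdots c_n$ of the $n$ vertices read around the cycle; there are $n!$ of these, and rotating or reversing a presentation changes neither the cycle nor which vertex sets form intervals of it. For a fixed $r$-set $F\subseteq[n]$, say a presented cycle is \emph{killed} by $F$ if there exist $t_1,\dots,t_k$ with $\bigcup_{i=1}^k\{c_{t_i+1},\dots,c_{t_i+x_i}\}=F$. Suppose $\F\subseteq[n]^{(r)}$ is such that $K_n$ has no $\x$-acceptable Hamilton cycle with respect to $\F$; then every one of the $n!$ presented cycles is killed by some $F\in\F$. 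Write $D$ for the number of presented cycles killed by a fixed $F$: this is independent of $F$, since $g\in S_n$ carries the cycles killed by $F$ bijectively onto those killed by $gF$ and $S_n$ is transitive on $r$-sets. Hence $|\F|\,D\ge n!$, and it remains to bound $D$ from above.

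Let $R$ be the number of pairs $(C,(t_1,\dots,t_k))$ in which $C$ is a presented cycle and $(t_1,\dots,t_k)$ realises $F$ in $C$. Since $\sum x_i=r=|F|$, the $k$ intervals in such a realisation are forced to be pairwise disjoint, so a pair is specified by a choice of $(t_1,\dots,t_k)$ giving pairwise disjoint intervals (at most $n^k$ choices), a bijection from the resulting $r$ positions onto $F$ ($r!$ choices), and a bijection from the other $n-r$ positions onto $[n]\setminus F$ ($(n-r)!$ choices); hence $R\le n^k\,r!\,(n-r)!$.

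The step that produces the stated constant (rather than the weaker $1/r!$ one would get from $D\le R$) is that every killed cycle is counted in $R$ with multiplicity at least $a(\x):=\prod_v m_v!$, where the product is over the distinct values $v$ appearing among $x_1,\dots,x_k$ and $m_v$ is the number of indices $i$ with $x_i=v$. Indeed, from one realisation $(t_1,\dots,t_k)$ and any permutation $\sigma$ of $\{1,\dots,k\}$ with $x_{\sigma(i)}=x_i$ for all $i$, relabelling the intervals by $\sigma$ gives another realisation, and distinct such $\sigma$ give distinct tuples because the intervals are disjoint and nonempty; there are exactly $a(\x)$ such $\sigma$. Thus $D\le R/a(\x)\le n^k\,r!\,(n-r)!/a(\x)$, and combining with $|\F|\,D\ge n!$ gives $|\F|\ge a(\x)\,n!/(n^k\,r!\,(n-r)!)$. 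Since $n!/(n-r)!=\prod_{i=0}^{r-1}(n-i)=(1-o(1))n^r$ for fixed $r$, the right-hand side is $(1-o(1))\tfrac{a(\x)}{r!}n^{r-k}$; and $\tfrac{a(\x)}{r!}=\tfrac{1}{c(\x)\,x_1!\cdots x_k!}$, because each unordered partition of an $r$-set into parts of sizes $x_1,\dots,x_k$ arises from exactly $a(\x)$ of the $r!/(x_1!\cdots x_k!)$ ordered ones. This is the claimed bound.

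I do not expect a genuine obstacle; the whole argument is elementary counting. The one point needing care is the multiplicity observation in the third paragraph: it is precisely what separates $1/(c(\x)\,x_1!\cdots x_k!)$ from the naive $1/r!$ — the two agree exactly when the $x_i$ are distinct but not otherwise (for instance they differ for $\x=(2,2)$) — so it cannot be skipped.
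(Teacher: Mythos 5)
Your proof is correct and is essentially the paper's argument: both are the same union-bound/averaging computation over all Hamilton cycles, with the constant $1/(c(\x)x_1!\cdots x_k!)$ emerging from counting how a fixed $F$ can occur as a union of intervals. The only cosmetic difference is bookkeeping — the paper works with the $(n-1)!$ cyclic orderings and bounds $|H(F)|\leq c(\x)x_1!\cdots x_k!(n-r+k-1)!$ directly, whereas you work with $n!$ linear presentations and obtain the same constant via the incidence count divided by the multiplicity $a(\x)=\prod_v m_v!$, using the identity $r!/(x_1!\cdots x_k!)=c(\x)a(\x)$.
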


Let $S$ be the set of all Hamilton cycles in $K_n$ (cyclic orderings of the vertices). Clearly $|S|=(n-1)!$. Given a set $F\in[n]^{(r)}$ we denote by $H(F)$ the set of all elements of $S$ which are not acceptable with respect to the single set $F$ (that is all cyclic orderings in which $F$ is a union of disjoint intervals of the appropriate size). Of course $H(F)$ also depends on $\x$ but it will always be clear from the context what $\x$ is and so this notation should cause no confusion.

\begin{proof}
If $\F$ is such that $K_n$ has no $\x$-acceptable Hamilton cycle with respect to $\F$ then $\bigcup_{F\in\F}H(F)=S$ and so $\sum_{F\in\F}|H(F)|\geq(n-1)!$. Now $|H(F)|$ does not depend on $F$ and $|H(F)|\leq c(\x)x_1!x_2!\dots x_k!(n-r+k-1)!$ (for certain $\x$ this will be an equality but in some cases it will be possible to have a Hamilton cycle for which $F$ can be represented as the union of suitable intervals in more than one way). It follows that
\[
c(\x)x_1!x_2!\dots x_k!(n-r+k-1)!|\F|\geq(n-1)!
\]
and so
\[
|\F|\geq\frac{(n-1)!}{c(\x)x_1!\dots x_k!(n-r+k-1)!}=\left(\frac{1}{c(\x)x_1!x_2!\dots x_k!}+o(1)\right)n^{r-k}
\]
as required.
\end{proof}

In fact this lower bound gives the correct order of magnitude for $m(\x,n)$ as the following theorem shows.

\begin{theorem}\label{mxn-ub:thm}
Given $\x$ as above and $n\geq r$ we have that
\[
m(\x,n)\leq\binom{n-k}{r-k}=\left(\frac{1}{(r-k)!}+o(1)\right)n^{r-k}.
\]
\end{theorem}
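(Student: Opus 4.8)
The plan is to exhibit, for each $n \ge r$, a family $\F \subseteq [n]^{(r)}$ of size $\binom{n-k}{r-k}$ that no $\x$-acceptable Hamilton cycle can avoid. The natural candidate is to fix a set of $k$ "marker" elements, say $\{1, 2, \dots, k\} \subseteq [n]$, and let
\[
\F = \{\, F \in [n]^{(r)} : \{1, 2, \dots, k\} \subseteq F \,\}.
\]
This family has exactly $\binom{n-k}{r-k}$ members, which is the claimed bound, and $\binom{n-k}{r-k} = \left(\frac{1}{(r-k)!} + o(1)\right) n^{r-k}$ by the usual polynomial expansion of the binomial coefficient in $n$ with $r,k$ fixed. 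So the content is entirely in showing that every Hamilton cycle fails to be $\x$-acceptable for this $\F$.

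The key step is this: take any Hamilton cycle, written as a cyclic ordering $c_1, c_2, \dots, c_n$. The $k$ marker vertices $1, \dots, k$ sit somewhere in this cyclic order, and they divide the cycle into $k$ arcs (the gaps between consecutive markers, going around). I want to extract from the cycle $k$ disjoint intervals of lengths $x_1, \dots, x_k$ whose union lies in $\F$, i.e. whose union contains all $k$ markers. The idea is to form, for each marker, an interval that starts at that marker and runs forward along the cycle for the appropriate length — but one must choose which marker gets which length, and arrange that these intervals are pairwise disjoint. Disjointness is automatic provided each interval, starting at its marker, does not reach the next marker; so I need to match lengths to markers so that the interval of length $x_i$ assigned to a given marker fits inside the arc following that marker (inclusive of the marker itself). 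Since $\sum x_i = r \le n$ and the $k$ arcs partition the $n$ cycle positions, a counting/averaging argument — or more carefully, a greedy assignment processing the arcs in cyclic order and always allotting the largest remaining length that still fits — should succeed. Actually the cleanest route may be: pick $t_i$ for each $i$ so that the interval $\{c_{t_i+1}, \dots, c_{t_i+x_i}\}$ has marker $i$ (or rather the $i$th marker in some chosen correspondence) as its \emph{last} element, i.e. $c_{t_i + x_i}$ is a marker; then the intervals are disjoint iff no interval overtakes the marker ending the previous one, which again reduces to fitting lengths into arcs.

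The main obstacle is precisely this fitting argument: it is conceivable that the markers are spaced so unevenly along the cycle that a naive greedy pairing of lengths to arcs fails. The resolution is that we only need each interval of length $x_i$ to fit within its arc \emph{together with} the marker at one endpoint, and the total length $\sum x_i = r$ does not exceed $n = \sum(\text{arc lengths})$; a standard argument (e.g. a Hall-type condition, or induction on $k$ peeling off the largest length and placing it in the largest arc) shows a valid assignment always exists. Once the assignment is fixed, disjointness and the containment of $\{1,\dots,k\}$ in the union are immediate, so the union is a member of $\F$ and the Hamilton cycle is not $\x$-acceptable. Since the cycle was arbitrary, $K_n$ has no $\x$-acceptable Hamilton cycle with respect to this $\F$, giving $m(\x, n) \le \binom{n-k}{r-k}$ as required.
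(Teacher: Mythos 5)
Your construction is exactly the paper's: the family $\F=\{F\in[n]^{(r)}:\{1,\dots,k\}\subseteq F\}$ of size $\binom{n-k}{r-k}$. The gap is in the combinatorial core, namely the claim that one can always assign to each marker its own interval of one of the lengths $x_1,\dots,x_k$, starting at that marker and confined to the arc between it and the next marker. This is false, and the ``resolution'' you offer (that $\sum x_i=r\le n=\sum(\text{arc lengths})$ plus a greedy or Hall-type argument yields such an assignment) does not hold: the inequality $\sum x_i\le\sum g_i$ on totals does not give a bijection $\sigma$ with $x_{\sigma(i)}\le g_i$ pointwise. Concretely, take $\x=(2,2)$ and a Hamilton cycle in which the two markers $1$ and $2$ are adjacent; one of the two arcs then has length $1$ and cannot accommodate an interval of length $2$ under your scheme, even though $r=4\le n$. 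So the one-marker-per-interval matching you rely on simply need not exist whenever markers cluster together.

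The missing idea is that a single interval must be allowed to swallow several consecutive markers. The paper does this by cutting the cycle into a linear permutation and inducting on $n$: scan $c_1,c_2,\dots$; if $c_1$ is not a marker, discard it and recurse; if $c_1$ is a marker, lay down the interval $c_1,\dots,c_{x_1}$ (which may cover further markers, reducing the number still to be covered below the number of remaining intervals, which only helps) and recurse on the suffix with lengths $(x_2,\dots,x_k)$. In the adjacent-markers example this places one length-$2$ interval over both markers and the second interval anywhere disjoint from it, and the union still contains $\{1,\dots,k\}$, hence lies in $\F$. Your proof needs this relaxation -- the union of the intervals must contain all markers, but there is no requirement of one marker per interval -- and without it the argument fails on infinitely many Hamilton cycles.
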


\begin{proof}
Let $\F=\{X\in[n]^{(r)}: \{1,2,\dots,k\}\subseteq X\}$. 

We will show that in any permutation of $[n]$ it is possible to find $k$ disjoint intervals of lengths $x_1,x_2,\dots,x_k$ such that
the union of these intervals contains $[k]$. This clearly shows that there is no $\x$-acceptable Hamilton cycle with respect to $\F$.

We will prove this claim by induction on $n$. If $n=1$ then the claim obviously holds. It is also clearly true if $n=r$. Suppose that $n>1$, $r<n$ and $c_1,\dots,c_n$ is our permutation. If $c_1\not\in[k]$ then applying the induction hypothesis to the permutation $c_2,\dots,c_n$ gives the result. If $c_1\in[k]$ then we will take our first interval to be $c_1,\dots,c_{x_1}$ and consider the permutation $c_{x_1+1},\dots,c_n$. Applying the induction hypothesis to this permutation with vector of interval lengths $(x_2,\dots,x_k)$ gives the result. (It may be that we have fewer than $k-1$ elements of $[k]$ contained in this permutation but this can only weaken the condition we need to satisfy.) 
\end{proof} 

If some of the $x_i$ are equal to 1 then the bound of Theorem \ref{mxn-ub:thm} is not sharp. In this situation we have the following stronger result.

\begin{theorem}\label{mxn-ub2:thm}
If $\x=(x_1,\dots,x_k)$ with $x_1\geq\dots\geq x_t>x_{t+1}=\dots=x_k=1$ then
\[
m(\x,n)\leq(1+o(1))\frac{\binom{n-t}{r-k}}{\binom{r-t}{r-k}}=\left(\frac{(k-t)!}{(r-t)!}+o(1)\right)n^{r-k}.
\]
\end{theorem}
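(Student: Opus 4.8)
The plan is to combine the greedy interval argument from the proof of Theorem~\ref{mxn-ub:thm} with an asymptotically optimal covering design, the point being that the $k-t$ singleton intervals are essentially \emph{free} and so need not be recorded in $\F$. Throughout, $n$ is taken large with $\x$ fixed, so that all the $o(1)$ terms make sense.

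Write $p=\sum_{i=1}^{t}x_i=r-k+t$ for the total length of the non-trivial intervals, and note $r-t>r-k$ since there is at least one singleton. First I would apply (the argument in) Theorem~\ref{mxn-ub:thm} to the shorter vector $(x_1,\dots,x_t)$: in any cyclic ordering of $[n]$ one can find pairwise disjoint intervals $I_1,\dots,I_t$ of lengths $x_1,\dots,x_t$ whose union $B$ is a $p$-set with $[t]\subseteq B$. Hence $S:=B\setminus[t]$ is an $(r-k)$-subset of $\{t+1,\dots,n\}$. Next I would fix a covering: a family $\mathcal T$ of $(r-t)$-subsets of $\{t+1,\dots,n\}$ such that every $(r-k)$-subset of $\{t+1,\dots,n\}$ lies in at least one member of $\mathcal T$. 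By Rödl's theorem on the asymptotics of covering designs (the nibble), one may take $|\mathcal T|\le(1+o(1))\binom{n-t}{r-k}/\binom{r-t}{r-k}$, which is exactly the claimed bound and, on expanding the binomial coefficients, equals $\left(\frac{(k-t)!}{(r-t)!}+o(1)\right)n^{r-k}$. Then set $\F=\{\,[t]\cup T:\ T\in\mathcal T\,\}$, so that $|\F|=|\mathcal T|$ and every element of $\F$ has size $t+(r-t)=r$, as required.

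Finally I would verify that $K_n$ has no $\x$-acceptable Hamilton cycle with respect to this $\F$. Given a cyclic ordering, produce $I_1,\dots,I_t$ and $B=[t]\cup S$ as above, pick $T\in\mathcal T$ with $S\subseteq T$, and take the $k-t$ singleton intervals to be the $k-t$ vertices of $T\setminus S$ (each a legitimate length-$1$ interval). These singletons are distinct vertices lying in $\{t+1,\dots,n\}$, hence disjoint from one another and from $B$; so $I_1,\dots,I_t$ together with them form pairwise disjoint intervals of lengths $x_1,\dots,x_k$ whose union is $B\cup(T\setminus S)=[t]\cup T\in\F$. Thus the ordering is not $\x$-acceptable, completing the construction.

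The only non-routine input is the covering design; the rest is the bookkeeping that $B\subseteq[t]\cup T$ and that $T\setminus S$ is disjoint from $B$, which is what lets the two halves of the construction mesh. I expect the main conceptual point to be simply recognising that the savings over the bound $\binom{n-t}{r-k}$ that Theorem~\ref{mxn-ub:thm} already gives here is governed precisely by how economically $(r-k)$-sets can be covered by $(r-t)$-sets, i.e.\ by the factor $\binom{r-t}{r-k}$.
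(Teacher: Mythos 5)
Your construction is correct and is essentially identical to the paper's: both take the family $\F=\{[t]\cup T\}$ where the $T$ form a near-optimal Rödl covering of $(r-k)$-subsets of $[n]\setminus[t]$ by $(r-t)$-subsets, and both use the inductive interval-finding claim from Theorem~\ref{mxn-ub:thm} applied to $(x_1,\dots,x_t)$ to locate the non-trivial intervals covering $[t]$, with the singletons then placed on $T\setminus S$. No substantive differences.
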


\begin{proof}
Let $\mathcal{M}$ be the smallest family of $(r-t)$-subsets of $[n]\setminus[t]$ with the property that every $(r-k)$-subset of $[n]\setminus[t]$ is contained in at least one set in $\mathcal{M}$. By R\"odl's proof of the Erd\H os-Hanani conjecture \cite{Rodl} we have that $|\mathcal{M}|=(1+o(1))\frac{\binom{n-t}{r-k}}{\binom{r-t}{r-k}}=\left(\frac{(k-t)!}{(r-t)!}+o(1)\right)n^{r-k}$. Now let $\F=\{X\in[n]^{(r)}: [t]\subseteq X, X\setminus[t]\in \mathcal{M}\}$. We will show that there is no $\x$-acceptable Hamilton cycle with respect to $\F$. From which the required upper bound on $m(\x,n)$ follows.

As in the proof of Theorem \ref{mxn-ub:thm}, in any permutation of $[n]$ it is possible to find $t$ disjoint intervals $X_1,\dots,X_t$ of lengths $x_1,\dots,x_t$ such that the union of these intervals contains $[t]$. Now $\left(X_1\cup\dots\cup X_t\right)\setminus[t]$ is an $(r-k)$-subset of $[n]\setminus[t]$ and so there is some $(r-t)$-set $M\in\mathcal{M}$ which contains it. It follows that the intervals $X_1,\dots,X_t$ together with $k-t$ singleton intervals form a set in $\F$. It follows that no Hamilton cycle is $\x$-acceptable with respect to $\F$.
\end{proof}

We tentatively conjecture that the upper bounds of Theorems \ref{mxn-ub:thm} and \ref{mxn-ub2:thm} are asymptotically sharp under the appropriate conditions on $\x$.

\begin{conj}\label{mxn:conj}
If $x_i\not=1$ for all $i$ then  
\[
m(\x,n)=\left(\frac{1}{(r-k)!}+o(1)\right)n^{r-k}.
\]
If $\x=(x_1,\dots,x_k)$ with $x_1\geq\dots\geq x_t>x_{t+1}=\dots=x_k=1$ then
\[
m(\x,n)=\left(\frac{(k-t)!}{(r-t)!}+o(1)\right)n^{r-k}.
\]
\end{conj}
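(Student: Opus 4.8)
Since Theorems~\ref{mxn-ub:thm} and~\ref{mxn-ub2:thm} already give the stated upper bounds, the plan is to prove matching lower bounds: every family $\F$ for which $K_n$ has no $\x$-acceptable Hamilton cycle should satisfy $|\F|\ge\bigl(\tfrac1{(r-k)!}-o(1)\bigr)n^{r-k}$ when no $x_i$ equals $1$, and the corresponding bound otherwise. The first point to confront is that the averaging argument of Theorem~\ref{mxn-lb:thm} cannot reach this: it only counts incidences between Hamilton cycles and forbidden sets, whereas in the extremal constructions a typical cyclic ordering is killed by \emph{several} members of $\F$ --- about $r$ of them when $k=1$, since a fixed vertex lies in exactly $r$ of the length-$r$ windows of any ordering --- and this multiplicity has to be shown to be forced. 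So even the case $\x=(r)$, where one must improve the averaging bound $n^{r-1}/r!$ to $n^{r-1}/(r-1)!$, already requires a genuinely new idea; I would expect the methods by which this paper settles the $\x=(2,1,\dots,1)$ and $\x=(3,1)$ cases to be the natural point of departure.

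The approach I would take is a stability dichotomy. Let $\F$ be a transversal with $|\F|$ below the conjectured value. One shows that either (i) $\F$ is essentially extremal --- all but a vanishing proportion of its members contain a common $t$-set, and modulo that $t$-set they form a near-optimal family of $(r-t)$-sets covering all $(r-k)$-sets --- in which case a direct count, using R\"odl's theorem~\cite{Rodl} in the mixed case, gives the bound; or (ii) $\F$ is spread out, in which case one constructs an $\x$-acceptable Hamilton cycle directly. For (ii) the natural device is a random-greedy construction of the cyclic ordering $c_1,c_2,\dots$: place vertices one at a time, forbidding a choice of $c_{j+1}$ only if it would complete some member of $\F$ as a union of intervals ending at the current position. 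The number of forbidden choices is governed by the codegrees $|\{F\in\F:S\subseteq F\}|$ over $(r-1)$-sets $S$ (and over partial interval patterns), and since these sum to $r|\F|$, only an $o(1)$ proportion of $(r-1)$-sets can be heavy; steering the construction to stay on light sets, and then using an absorbing reservoir to close the path into a cycle, should complete case (ii).

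The main obstacle is exactly the dichotomy --- proving that a near-minimal transversal of the ``bad-set hypergraph'' $\bigl\{\{F\in[n]^{(r)}:F\text{ is bad for }c\}:c\text{ a cyclic ordering}\bigr\}$ must resemble the ``fix a $t$-set'' family. This host hypergraph is highly non-symmetric and does not carry the structure that makes classical stability arguments run, and the transversal property is not obviously preserved by the natural shifting or compression operations on $\F$, so pushing $\F$ towards the extremal shape seems delicate. It is also worth checking whether the correct \emph{fractional} relaxation already attains the conjectured constant (for instance by weighting cyclic orderings by how generic their family of bad sets is, rather than uniformly); the factor-$r$ gap above for $\x=(r)$ strongly suggests it does not, so that integrality has to be used in an essential way, which is a large part of what makes the conjecture hard.

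Finally, in the mixed case $x_1\ge\dots\ge x_t>x_{t+1}=\dots=x_k=1$ one should try to peel off the $k-t$ singleton intervals first. Given a transversal $\F$, absorbing the singletons reduces matters to showing that, for a typical placement of the $t$ long intervals, the set of $(r-k)$-subsets of the remaining coordinates that become blocked covers almost all $(r-k)$-subsets; the lower bound would then follow from the elementary counting bound for covering designs --- the easy half of the Erd\H os--Hanani problem. This makes the singleton part of the conjecture look somewhat more approachable than the $x_i\ne1$ part, although carrying out this reduction uniformly over placements of the long intervals and over all cyclic orderings will itself need the stability input of case (i).
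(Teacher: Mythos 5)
The statement you were asked about is Conjecture~\ref{mxn:conj}; the paper does not prove it, and neither do you. What the paper actually establishes is only the upper bounds (Theorems~\ref{mxn-ub:thm} and~\ref{mxn-ub2:thm}), lower bounds that miss the conjectured constant (Theorem~\ref{mxn-lb:thm}, improved by a factor $\frac{4kr+1}{4kr}$ in Theorem~\ref{mxn-lb2:thm}, and $\frac19$ versus $\frac12$ for $\x=(2,2)$ in Theorem~\ref{22:thm}), and the single fully resolved case $\x=(2,1,\dots,1)$ (Theorem~\ref{211:thm}). Your diagnosis of why averaging cannot work is correct and matches the paper's own discussion: in the extremal family for $\x=(r)$ a typical cyclic ordering is blocked by about $r$ members of $\F$, so the incidence count is off by exactly the factor one needs to recover, and the multiplicity must be shown to be forced. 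That much is a fair restatement of the difficulty, not progress on it.

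The genuine gap is that every step of your plan that goes beyond the paper is asserted rather than proved. The stability dichotomy --- either $\F$ essentially fixes a $t$-set or it is spread out --- is precisely the missing ingredient, and you acknowledge yourself that the ``bad-set hypergraph'' carries no symmetry or compression structure that would let a classical stability argument run; nothing in the proposal supplies a substitute. Case (ii) is likewise only a sketch: the random-greedy ordering with an absorbing reservoir needs control of codegrees over \emph{partial interval patterns}, not just over $(r-1)$-sets, and closing the path into a cycle while avoiding all of $\F$ is exactly where such constructions usually fail. Two smaller cautions: the one case the paper does settle, $\x=(2,1,\dots,1)$, is proved by reformulating the problem as deleting copies of $K_r$ from $K_n$ and invoking Bondy--Chv\'atal and Dirac, a route that does not visibly feed into your peel-off-the-singletons reduction; and the paper's remark after Theorem~\ref{22:thm}, that the exact extremal family for $(2,2)$ is \emph{not} the ``fix a $2$-set'' family (one can save $\frac12 n^{3/2}$ sets using a $C_4$-free graph), is a warning that the clean structural conclusion in your case (i) can only hold in an asymptotic, not exact, sense. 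So the conjecture remains open, and your proposal should be read as a research programme whose central lemma is unestablished.
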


As we shall see later the exact upper bound of Theorem \ref{mxn-ub:thm} is not always correct even when $x_i\not=1$ for all $i$. This
slight improvement suggest that even if the conjecture is correct, the extremal families may have quite a complicated structure.

The next result improves the bound given by the averaging argument of Theorem \ref{mxn-lb:thm} (except in the trivial case when $x_i=1$ for all $i$).

\begin{theorem}\label{mxn-lb2:thm}
Given $\x$ as above with the $x_i$ not all equal to 1 we have that
\[
m(\x,n)\geq\frac{4kr+1}{4kr}\left(\frac{1}{c(\x)x_1!x_2!\dots x_k!}+o(1)\right)n^{r-k}.
\]
\end{theorem}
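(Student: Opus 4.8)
The plan is to refine the averaging argument of Theorem~\ref{mxn-lb:thm} by quantifying the "overcounting" that occurs when a fixed $r$-set $F$ can be realised as a union of intervals of lengths $x_1,\dots,x_k$ in a given cyclic ordering in more than one way. The crude bound $|H(F)|\le c(\x)x_1!\cdots x_k!(n-r+k-1)!$ comes from choosing an unordered partition of $F$ into parts of sizes $x_1,\dots,x_k$, ordering each part internally, and then inserting the $k$ resulting blocks into a cyclic ordering of the remaining $n-r$ vertices. The point is that when not all $x_i$ equal $1$, a positive proportion of the cyclic orderings counted this way are counted \emph{at least twice}: if some $x_i\ge 2$, then inside a block $c_{t_i+1},\dots,c_{t_i+x_i}$ of length at least $2$ we may "slide" the block boundary, or merge two adjacent blocks and re-split them, producing the same cyclic ordering from a genuinely different choice of $(t_1,\dots,t_k)$ together with a different partition-and-ordering of $F$.

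First I would make this precise by showing that among the (at most) $c(\x)x_1!\cdots x_k!(n-r+k-1)!$ triples (unordered partition of $F$, internal orderings, cyclic placement) producing a fixed element of $H(F)$, a proportion at least $\frac{1}{4kr}$ (say) of the \emph{resulting cyclic orderings} arise from at least two distinct triples, hence
\[
|H(F)|\le\Big(1-\tfrac{1}{4kr}+o(1)\Big)\,c(\x)x_1!\cdots x_k!\,(n-r+k-1)!.
\]
The cleanest way to see a collision: fix $i$ with $x_i\ge 2$. In the placement, consider the block $B_i$ of length $x_i$ and the vertex $v$ immediately following $B_i$ in the cyclic order. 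If $v\notin F$ — which happens for "most" placements, since $|F|=r$ and there are $n-r$ outside vertices, so the fraction of placements where \emph{some} chosen block is immediately followed by an $F$-vertex is $O(r^2/n)=o(1)$ — then one can instead take the block to start one position later and end one position later only if the new last vertex lies in $F$; that is not automatic. So the robust collision to use is the "two adjacent blocks" one: if two of the blocks $B_i,B_j$ happen to be cyclically adjacent (again this is a $\Theta(1/n)$-rare event, so \emph{not} what we want) — this direction also fails. The collision I actually want is purely internal: if $x_i\ge 2$ the same set-partition-class can be re-ordered, but that is already accounted for by the $x_i!$ factor. Hence the genuine source of savings is: for a \emph{fixed} cyclic ordering $\sigma\in H(F)$, count in how many ways $F$ decomposes into intervals of the given lengths \emph{within} $\sigma$; I would show that for a $\frac{1}{4kr}+o(1)$ fraction of the orderings in $H(F)$ there are at least two such decompositions. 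Concretely, if $\sigma$ realises $F$ as intervals $I_1,\dots,I_k$, look at whether the vertex just before $I_1$ lies in $F$ or whether $I_1$ is adjacent to another $I_j$; when $x_1\ge 2$, one can often also split $I_1\cup I_2$ differently, or (when $I_1,I_2$ adjacent and $x_1,x_2$ suitable) shift the cut between them. Summing the probabilities of these "flexibility" events over a random element of $H(F)$ and showing the total is $\ge \frac{1}{4kr}+o(1)$ gives the bound.

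Plugging the improved estimate for $|H(F)|$ into $\sum_{F\in\F}|H(F)|\ge (n-1)!$ exactly as in the proof of Theorem~\ref{mxn-lb:thm} yields
\[
|\F|\ge\frac{(n-1)!}{\big(1-\frac{1}{4kr}+o(1)\big)c(\x)x_1!\cdots x_k!\,(n-r+k-1)!}
=\frac{4kr}{4kr-1}\Big(\frac{1}{c(\x)x_1!\cdots x_k!}+o(1)\Big)n^{r-k},
\]
and since $\frac{4kr}{4kr-1}\ge\frac{4kr+1}{4kr}$ this gives the claimed inequality. The main obstacle is the combinatorial heart of the argument: pinning down an explicit family of "re-decomposition" moves on a cyclic ordering that (a) genuinely changes the decomposition of $F$ into the prescribed interval lengths, (b) occurs with probability bounded below by an absolute constant over $kr$ for a uniformly random element of $H(F)$, and (c) has the various exceptional configurations (blocks adjacent to each other or to $F$-vertices, small $n$) contributing only $o(1)$. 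Getting a clean, uniform constant like $\frac{1}{4kr}$ — rather than something that degrades with $\x$ — is where the care is needed; everything after that is the same bookkeeping as in Theorem~\ref{mxn-lb:thm}.
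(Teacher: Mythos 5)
Your proposal hinges on the claim that, for a single fixed $F$, a $\frac{1}{4kr}+o(1)$ proportion of the cyclic orderings in $H(F)$ admit at least two decompositions of $F$ into intervals of lengths $x_1,\dots,x_k$, so that $|H(F)|\le\bigl(1-\frac{1}{4kr}+o(1)\bigr)c(\x)x_1!\cdots x_k!(n-r+k-1)!$. This is false, and no improvement of this kind is available: a second decomposition forces either two of the realising intervals to be cyclically adjacent, or an element of $F$ to sit immediately next to an interval it does not belong to, and for a uniformly random placement of the $k$ blocks among the $n-r$ remaining vertices each of these events has probability $O(1/n)$. Hence $|H(F)|=\bigl(1-O(1/n)\bigr)c(\x)x_1!\cdots x_k!(n-r+k-1)!$, and the overcounting for a single set is only a $1+o(1)$ factor. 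You half-observe this yourself mid-argument (the ``$O(r^2/n)=o(1)$'' remark and the repeated ``this direction also fails''), but then assert the constant-fraction collision anyway. Any analysis of one $H(F)$ in isolation can therefore only reproduce the constant of Theorem \ref{mxn-lb:thm}; it cannot beat it.

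The genuine source of the improvement must be global: one has to use the hypothesis that $\bigcup_{F\in\F}H(F)=S$ to show the cover has multiplicity, i.e.\ that a positive proportion of cycles $C$ satisfy $d(C)\ge2$, which upgrades $\sum_{F\in\F}|H(F)|=\sum_{C\in S}d(C)\ge|S|$ to $\ge\bigl(1+\frac{1}{4kr+1}+o(1)\bigr)|S|$. The paper does this by taking a cycle $C$ with $d(C)=1$, witnessed by a unique $F$ whose intervals are well separated; transposing two adjacent vertices to obtain a cycle $C'$ that leaves $H(F)$; invoking the covering hypothesis to produce $F'\in\F$ with $C'\in H(F')$; and then constructing a third cycle $C''\in H(F)\cap H(F')$, so $d(C'')\ge2$. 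A bounded-multiplicity count (each such $C''$ arises from at most $4kr$ cycles in $U$) then yields the stated constant. Your proposal never uses the covering property of $\F$ beyond the first-moment inequality, so the gap is structural rather than a matter of sharpening estimates.
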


The proof is by showing that the assumptions made in the proof of Theorem \ref{mxn-lb:thm} cannot hold with equality. The main aim of this result is to demonstrate that Theorem \ref{mxn-lb:thm} is not sharp and we have not made a particular effort to obtain the strongest bound this method will give. We will however go through the details more carefully in one special case later. 

If $C$ is a Hamilton cycle (cyclic ordering) then let $d(C)$ be the number of $F\in\F$ for which $F$ consists of $k$ disjoint intervals of lengths $x_1,\dots,x_k$ in $S$ (in other words the number of $F\in\F$ for which $C\in H(F)$). Strictly $d(C)$ depends on $\F$ and $\x$ as well as $C$ but it will always be clear from the context what these are.

\begin{proof}
We will assume that $x_1=t$ is the maximum of the $x_i$. 

Let $\F$ be such that there is no $\x$-acceptable Hamilton cycle with respect to $\F$. We may assume also that $|\F|\leq\binom{n-k}{r-k}$ since we know that this is an upper bound for $m(\x,n)$. 

We have that $\sum_{F\in\F}|H(F)|=\sum_{C\in S}d(C)$. The property that there is no $\x$-acceptable Hamilton cycle with respect to $\F$ is equivalent to having $d(C)\geq1$ for all $C\in S$. Previously we used this bound on $d(C)$ to deduce a bound on $\F$. Here we will show that it is not possible for all the $d(C)$ to be this small and so obtain a stronger bound on their sum. Let 
\[
U'=\{C\in S : d(C)=1\}.
\]
For a cycle in $U'$ there is a unique $F\in\F$ for which $C\in H(F)$. Let $U$ be the set of all cycles in $U'$ for which the end points of any two of the $k$ intervals in $C$ whose union is this $F$ are at distance at least $r$ around $C$. We have that each $F\in \F$ gives rise to at most $c(n-r+k-2)!$ cycles in $U'\setminus U$ where $c$ is some constant depending only on $\x$. It follows that $|U'\setminus U|\leq |\F|c(n-r+k-2)!=o((n-1)!)$. Hence, if we can show that $|U|\leq\alpha(n-1)!$ then we will have $|U'|\leq(\alpha+o(1))(n-1)!$. Our aim is to prove such a bound by showing that each cycle in $U$ gives rise to a cycle not in $U'$.

Suppose that $C\in U$. Without loss of generality we may assume that $C=1,2,\dots,n$ and that $C\in H(F)$ where $F=\bigcup_{i=1}^k\{a_i+1,\dots,a_i+x_i\}$ with $a_1=1$. Consider the cycle $C'=1,2,\dots,t-1,t+1,t,t+2,\dots,n$ (that is $C$ with $t$ and $t+1$ exchanged). This is not in $H(F)$ since $F$ meets $C'$ in $k+1$ intervals (we are using here that $C\not\in U'$). Because there is no $\x$-acceptable Hamilton cycle we must have $C'\in H(F')$ for some $F'\in\F$. Now since $C\not\in H(F')$ we must have that either $t-1,t+1\in F'$, $t\not\in F'$ (Case 1) or $t,t+2\in F'$, $t+1\not\in F'$ (Case 2).
     
If we are in Case 1 then one of the $k$ intervals in $C'$ which is contained in $F'$ must be the length $t-i+1$ interval $i,i+1,\dots,t-1,t+1$. By our choice of $x_1$ to be the largest $x_i$ we have that $i\geq1$ and so the interval must be of this form. Let $C''$ be the cycle 
\[
1,2,\dots,i-1,t,i,i+1,\dots,t-1,t+1,t+2,\dots, n.
\]
Now it is clear that $C''\in H(F)$ and also $C''\in H(F')$ and so $d(C'')\geq2$.

If we are in Case 2 then suppose that one of the $k$ intervals in $C'$ which is contained in $F'$ is the length $i-t$ interval $t,t+2,t+3,\dots,i$. Let $C''$ be the cycle
\[
1,2,\dots,t-1,t,t+2,t+3,\dots,i,t+1,i+1,\dots,n.
\]
Now it is clear that $C''\in H(F')$. We also know, since the intervals of $C$ whose union is $F$ are at distance at least $r$ round the cycle, that none of $t+1,\dots,i$ are elements of $F$ (this is where we use the fact that we are working in $U$ and not just $U'$). It follows that $C''\in H(F)$ and so $d(C'')\geq2$.

Notice finally that in each case $C''$ was constructed from $C$ by moving one vertex by $x_i$ places in the cycle for some $i$. It follows that each cycle in $C''\in S\setminus U'$ can arise in this way from at most $4kr$ different cycles in $U$ (if $F,G\in\F$ with $C''\in H(F)\cap H(G)$ then the moved vertex must be in $F\cup G$ and there are at most $2k$ choices for where to move it). Hence
\[
|U|\leq 4kr|S\setminus U|
\]
and so
\[
|U|\leq\frac{4kr}{4kr+1}|S|
\]
Now since $U'\setminus U=o((n-1)!)$ we have that $U'\leq\left(\frac{4kr}{4kr+1}+o(1)\right)(n-1)!$. Using the same approach as in Theorem \ref{mxn-lb:thm}
\[
c(\x)x_1!x_2!\dots x_k!(n-r+k-1)!|\F|\geq(n-1)!+\left(\frac{1}{4kr+1}+o(1)\right)(n-1)!
\]
That is
\[
|F|\geq\left(\frac{4kr+1}{4kr}\frac{1}{c(\x)x_1!x_2!\dots x_k!}+o(1)\right)n^{r-k}
\]
\end{proof}

We now address some natural special cases of our problem of determining $m(\x,n)$. One, mentioned in \cite{K}, is the case
$\x=(2,2)$ which we referred to earlier. Theorems \ref{mxn-ub:thm} and \ref{mxn-lb:thm} show that
\[
\left(\frac{1}{12}+o(1)\right)n^2\leq m((2,2),n)\leq\left(\frac{1}{2}+o(1)\right)n^2.
\]
Going through a similar argument to Theorem \ref{mxn-lb2:thm} with more care and refining the construction of Theorem \ref{mxn-ub:thm} we are able to improve the constant in the lower bound and the $o(n^2)$ term in the upper bound as follows

\begin{theorem}\label{22:thm}
For some constant $c>0$ we have
\[
\left(\frac{1}{9}+o(1)\right)n^2\leq m((2,2),n)\leq\frac{1}{2}n^2-\frac{1}{2}n^{3/2}
\]
\end{theorem}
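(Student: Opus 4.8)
# Proof Proposal for Theorem \ref{22:thm}

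The plan is to establish the two bounds separately, each by refining an existing argument from the excerpt.

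\textbf{Lower bound.} I would push the argument of Theorem \ref{mxn-lb2:thm} through in the special case $\x=(2,2)$, being more careful about the fraction of Hamilton cycles $C$ with $d(C)=1$. Here $c(\x)=3$ (the number of ways to split a $4$-set into two pairs), $x_1!x_2!=4$, and $r=4$, $k=2$, so the crude version of Theorem \ref{mxn-lb2:thm} gives roughly $\frac{33}{32}\cdot\frac{1}{12}n^2$, which is weaker than $\frac19 n^2 = \frac{32}{288}n^2$ versus $\frac{33}{384}n^2$. To get $\frac19$ I would exploit the extra structure of the $(2,2)$ case more aggressively: when $C\in U$ with unique bad set $F=\{a+1,a+2\}\cup\{b+1,b+2\}$, swapping an adjacent pair of vertices inside one of the two intervals produces $C'$ with $C'\notin H(F)$, and then tracking the forced $F'\in\F$ as in the proof of Theorem \ref{mxn-lb2:thm} yields a cycle $C''$ with $d(C'')\ge 2$. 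The key quantitative improvement is to observe that in the $(2,2)$ case each such $C''$ arises from far fewer than $4kr$ cycles in $U$ — because the moved vertex lies in a set of bounded size and the intervals have length exactly $2$, a careful count should give a multiplicity like $8$ or so rather than $16$, and moreover we can perform the swap in \emph{both} intervals of $F$ independently, effectively doubling the number of cycles we move out of $U'$. Combining these gives $|U'|\le(\frac12+o(1))(n-1)!$, i.e. at least half the cycles $C$ have $d(C)\ge 2$, so $\sum_{C}d(C)\ge \frac32(n-1)!$, and then the averaging identity $12(n-r+1)!\,|\F|\ge\frac32(n-1)!$ yields $|\F|\ge(\frac19+o(1))n^2$.

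\textbf{Upper bound.} For the construction I would modify the family $\F=\{X\in[n]^{(4)}: \{1,2\}\subseteq X\}$ from Theorem \ref{mxn-ub:thm}. That family has size $\binom{n-2}{2}=\frac12 n^2-\frac52 n+3$; I need to shave off a $\frac12 n^{3/2}$ term while keeping the property that every cyclic ordering has two disjoint edges with union in $\F$. The idea is that in any cyclic ordering, vertices $1$ and $2$ each lie in two intervals of length $2$, and we showed one can always pick disjoint intervals of lengths $2,2$ covering $\{1,2\}$; but in fact there is usually more than one valid choice of the \emph{other} two vertices. So one can restrict $\F$ to sets $\{1,2,x,y\}$ where the pair $\{x,y\}$ is constrained to lie in some sub-family $\mathcal{D}$ of $[n\setminus\{1,2\}]^{(2)}$ that still meets every possible "covering choice". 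Concretely, if $1$ sits between $u$ and its partner and $2$ sits between $v$ and its partner, the two extra vertices come from a small explicit set determined by the cyclic neighbours, and one needs $\mathcal{D}$ to form a covering design of the right density; choosing $\mathcal{D}$ to omit a suitably chosen clique or bipartite-type structure of size $\Theta(n^{3/2})$ (the threshold coming from the requirement that every relevant pair is still hit) removes the claimed $\frac12 n^{3/2}$ term. I would verify by a short case analysis that no $(2,2)$-acceptable Hamilton cycle survives.

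\textbf{Main obstacle.} The hard part will be the lower bound: squeezing the constant from the generic $\frac{33}{32}$-type improvement up to a full $\frac19$ requires genuinely using that $\x=(2,2)$ and that intervals have length $2$, and in particular correctly counting — without over- or under-counting — how many cycles in $U$ map to a given cycle outside $U'$ under the two-swaps-in-two-intervals operation. The combinatorial bookkeeping of which swaps are "legal" (respecting the distance-$\ge r$ condition defining $U$ versus $U'$) and ensuring the swap really does leave $H(F)$ while entering $H(F')$ is delicate, and getting the exact multiplicity constant is what determines whether the bound is $\frac19$ or something slightly worse. The upper-bound construction, by contrast, is mostly a matter of choosing the right covering-type sub-family and checking the (finitely many) configurations.
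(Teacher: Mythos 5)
Your overall strategy for both bounds points in the right direction, but each half has a concrete gap. For the lower bound, your arithmetic does not close: if you really had $|U'|\le(\frac12+o(1))(n-1)!$ then $\sum_C d(C)\ge\frac32(n-1)!$ would give $|\F|\ge(\frac18+o(1))n^2$, not $\frac19$ --- so either the constant you should be claiming is different, or (more likely) your count does not actually deliver $|U'|\le\frac12|S|$. More importantly, the count is asserted rather than proved. The paper's argument does not bound $|U'|$ directly by a fraction of $|S|$; it splits $S\setminus U'$ into $D=\{C:d(C)=2\}$ and $T=\{C:d(C)\ge3\}$, shows by performing the swap near \emph{both} intervals of the unique bad set that every $C\in U$ has either $2$ neighbouring cycles in $T$, or $3$ in $D\cup T$ with at least one in $T$, or $4$ in $D\cup T$; this gives $4|U|\le2e(U,T)+e(U,D)$ and, via the multiplicity-$8$ bound, $|U|\le4|T|+2|D|$, hence $\sum_C d(C)\ge|S|+|D|+2|T|\ge|S|+\frac12|U|$. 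Playing this against the trivial bound $\sum_C d(C)\ge2|S|-|U|-o((n-1)!)$ and balancing at $|U|=\frac23|S|$ is exactly what produces $\frac43\cdot\frac1{12}=\frac19$. Your sketch never distinguishes $d(C)=2$ from $d(C)\ge3$, and without that weighting the "doubling" you describe does not obviously beat the generic constant; you would need to supply the neighbour-counting case analysis explicitly.

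For the upper bound, removing $\Theta(n^{3/2})$ pairs from $\{X\in[n]^{(4)}:1,2\in X\}$ is the right move, but your construction is not pinned down, and the part you guess concretely is wrong. The removed pairs must form a graph $H$ on $\{3,\dots,n\}$ that is simultaneously $C_4$-free (so that when $1$ and $2$ are at cyclic distance greater than $2$, one of the four cross pairs $(c_{i\pm1},c_{j\pm1})$ is a non-edge of $H$ and hence yields a set of $\F$), triangle-free (for the distance-$2$ case), and without a Hamilton path (for the case where $1$ and $2$ are adjacent on the cycle, which your sketch does not address at all). A clique with $\Theta(n^{3/2})$ edges cannot serve as the removed structure since it is full of $4$-cycles, which would let a Hamilton cycle evade $\F$ entirely; the correct object is a $C_4$-free, triangle-free graph with $\frac12n^{3/2}$ edges obtained from the K\H ov\'ari--S\'os--Tur\'an theorem. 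Identifying these three forbidden configurations is precisely the content of the "short case analysis" you defer, and without them the claimed $\frac12n^{3/2}$ saving is not justified.
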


\begin{proof}
For the upper bound let $H$ be a graph with $V(H)=\{3,4,\dots,n\}$ which is $C_4$-free, triangle-free, and contains no Hamilton path. We will show that the there are no $(2,2)$-acceptable Hamilton cycles with respect to the family 
\[
\F=\{X\in[n]^{(4)}:1,2\in X, X\setminus\{1,2\}\not\in E(H)\}.
\]
The simplest case of the K\H ov\'ari-S\'os-Tur\'an Theorem \cite{KST} shows that there is a $C_4$-free bipartite graph with at most $1/2n^{3/2}+n$ edges. A suitable $H$ may be constructed from such a graph by deleting at most $n$ edges. It follows that we may take $|E(H)|=1/2n^{3/2}$ which will establish the upper bound.

To prove that $\F$ is as required we show that any Hamilton cycle has two disjoint edges whose union is an element of $\F$. Suppose that our Hamilton cycle is $C=c_1,\dots,c_n$ with $c_i=1$, $c_j=2$ and (without loss of generality) $1<i<j<n$. We consider three cases:
\begin{enumerate}
\item[Case 1:] If $j-i>2$ then consider the pairs $(c_{i-1},c_{j-1})(c_{i-1},c_{j+1}),(c_{i+1},c_{j-1}),(c_{i+1},c_{j+1})$. Since $H$ is $C_4$-free at least one of these pairs is not an edge of $H$. It follows that at least one of these pairs together with 1,2 forms a 4-set in $\F$ and so $C$ is not $(2,2)$-acceptable with respect to $\F$.
\item[Case 2:] If $j-i=2$ the consider the pairs $(c_{i-1},c_{i+1}),(c_{i-1},c_{i+3}),(c_{i+1},c_{i+3})$. Since $H$ is triangle-free at least one of these pairs is not an edge of $H$. It follows that at least one of these pairs together with 1,2 forms a 4-set in $\F$ and so $C$ is not $(2,2)$-acceptable with respect to $\F$.
\item[Case 3:] If $j-i=1$ then consider the pairs $(c_{j+1},c_{j+2}),(c_{j+2},c_{j+3}),\dots,(c_{i-2},c_{i-1})$. Since $H$ does not have a Hamilton path at least one of these pairs is not an edge of $H$. It follows that at least one of these pairs together with 1,2 forms a 4-set in $\F$ and so $C$ is not $(2,2)$-acceptable with respect to $\F$.
\end{enumerate}

For the lower bound let $\F$ be such that there is no $(2,2)$-acceptable Hamilton cycle with respect to $\F$ and assume that $|\F|\leq\binom{n-2}{2}$. As before we have that $\sum_{F\in\F}|H(F)|=\sum_{C\in S}d(C)$. Let 
\[
U'=\{C\in S : d(C)=1\}.
\]
For a cycle $C\in U'$ there is a unique $F\in\F$ for which $C\in H(F)$. Let $U$ be the set of all cycles in $U'$ for which the end points of the 2 edges whose union is this $F$ are at distance at least 3 around $C$. As in the proof of Theorem \ref{mxn-lb2:thm} $|U'\setminus U|=o((n-1)!)$. We also define
\begin{align*}
D&=\{C\in S : d(C)=2\}\\
T&=\{C\in S : d(C)\geq3\}.
\end{align*}

If $i$ and $j$ are consecutive elements of a cyclic ordering $C$ we will denote by $\pi_{i,j}(C)$ the cyclic ordering formed by swapping $i$ and $j$.We will refer to cycles $B$ and $C$ with $C=\pi_{i,j}(B)$ as being neighbouring.

Suppose that the cycle $C$ has $d(C)=1$ and let $F=\{a,b,x,y\}\in\F$ with $a$ and $b$ consecutive in $C$ and $x$ and $y$ consecutive in $C$. Suppose that $a,b,c,d$ are consecutive in $C$ and consider the cycle $C'=\pi_{b,c}(C)$. There is a set $F'\in\F$ with $C'\in H(F')$ and we must have either $a,c\in F'$ (Case 1) or $b,d\in F'$ (Case 2). Since $C\not\in H(F')$ we cannot have that $F'=\{a,b,c,d\}$ and so exactly one of these cases occurs.

If we are in Case 1 then the cycle $C''=\pi_{a,b}(C)$ is in both $H(F)$ and $H(F')$ and so $d(C'')\geq2$.

If we are in Case 2 then the cycle $C''=\pi_{d,c}(C)$ is in both $H(F)$ and $H(F')$ and so $d(C'')\geq2$ (note that here we need the condition which distinguishes $U$ from $U'$ to ensure that $C''\in H(F)$).

Repeating the same argument starting from $C'=\pi_{z,a}(C)$ where $z$ is the predecessor of $a$ in $C$ we obtain that $C$ has either
at least 2 neighbouring cycles in $D\cup T$ or at least 1 neighbouring cycle in $T$.

Repeating the same argument starting with the pair $x,y$ we obtain that $C$ has one of the following:
\begin{itemize}
\item at least 2 neighbouring cycles in $T$,
\item at least 3 neighbouring cycle in $D\cup T$ of which at least 1 is in $T$,
\item at least 4 neighbouring cycles in $D\cup T$.
\end{itemize}
Writing $e(U,T)$ for the number of pairs of neighbouring cycles with one in $U$ and one in $T$ and similarly for $e(U,D)$ we obtain that
\[
4|U|\leq 2e(U,T)+e(U,D).
\]
It is easy to see that a cycle in $D\cup T$ can neighbour at most 8 cycles in $U$ and so $e(U,T)\leq 8|T|$, $e(U,D)\leq8|D|$. From this we get that
\[
|U|\leq 4|T|+2|D|.
\]
Now 
\[
12(n-3)!|\F|=\sum_{F\in\F}|H(F)|=\sum_{C\in S}d(C)\geq|S|+|D|+2|T|\geq|S|+\frac{1}{2}|U|.
\]
Also
\[
12(n-3)!|\F|\geq|S|+(|S|-|U'|)=2|S|-|U|+o((n-1)!).
\]
Taking whichever of these bounds is stronger depending on $|U|$ we conclude that
\[
12(n-3)!|\F|\geq\frac{4}{3}|S|+o((n-1)!)
\]
and so 
\[
|\F|\geq\left(\frac{1}{9}+o(1)\right)n^2.
\]
\end{proof}

Another natural case is $\x=(2,1,..,1)$ . Here we have a lower bound which agrees asymptotically with the upper bound from Theorem \ref{mxn-ub2:thm}. 

\begin{theorem}\label{211:thm}
Let ${\bf x}=(2,\underbrace{1,\dots,1}_{r-2})$ with $r$ fixed. We have 
\[
m(\x,n)=\left(\frac{1}{r-1}+o(1)\right)n.
\]
\end{theorem}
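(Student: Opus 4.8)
The plan is to take the upper bound $m(\x,n)\le(1/(r-1)+o(1))n$ for free from Theorem~\ref{mxn-ub2:thm} (the case $t=1$, $k=r-1$) and to concentrate on the matching lower bound $m(\x,n)\ge(1/(r-1)-o(1))n$. The crucial first step is to notice that for $\x=(2,1,\dots,1)$ the acceptability condition collapses to something very concrete: an $r$-set $F$ is a union of disjoint intervals of lengths $2,1,\dots,1$ in a cyclic ordering $C$ if and only if two elements of $F$ are consecutive in $C$ (take the consecutive pair as the length-$2$ interval and the remaining $r-2$ points as singletons; conversely any such decomposition produces a consecutive pair inside $F$). Consequently, writing $P(\F)=\bigcup_{F\in\F}F^{(2)}$ for the associated set of ``forbidden pairs'', a cyclic ordering is $\x$-acceptable for $\F$ precisely when, read as a Hamilton cycle of $K_n$, it uses no edge of $P(\F)$, i.e.\ precisely when it is a Hamilton cycle of the graph $G=K_n\setminus P(\F)$. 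So ``$\F$ has no $\x$-acceptable Hamilton cycle'' means exactly ``$G$ is non-Hamiltonian'', and the whole problem becomes: show that non-Hamiltonicity of $K_n\setminus P(\F)$ forces $|\F|\ge(1/(r-1)-o(1))n$.

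To do this I would suppose $|\F|\le(1-\delta)n/(r-1)$ for a fixed $\delta>0$ and show that $G=K_n\setminus P(\F)$ is Hamiltonian once $n$ is large, a contradiction. Writing $P=P(\F)$, two crude estimates suffice. First, $|P|\le\binom{r}{2}|\F|$, so $\sum_v\deg_P(v)=2|P|\le r(r-1)|\F|\le r(1-\delta)n$; hence the $i$-th largest value of $\deg_P$ is at most $r(1-\delta)n/i$, so only a bounded number of vertices can have $P$-degree near $n/2$. Second, since each $F\in\F$ containing a vertex $v$ accounts for at most $r-1$ pairs of $P$ at $v$, we get $\deg_P(v)\le(r-1)\,|\{F\in\F:v\in F\}|\le(r-1)|\F|\le(1-\delta)n$ for \emph{every} $v$; so no vertex is almost completely joined to $P$. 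Since $\deg_G(v)=n-1-\deg_P(v)$, the $i$-th smallest degree of $G$ is $n-1$ minus the $i$-th largest value of $\deg_P$, so these bounds give, for the degree sequence $d_1\le d_2\le\dots\le d_n$ of $G$, the estimate $d_i\ge n-1-(1-\delta)n\min(1,r/i)$.

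It then remains to feed this into Chv\'atal's Hamiltonicity criterion, and for that it is more than enough to verify $d_i>i$ for all $1\le i<n/2$. For $i\le r$ this is immediate: $d_i\ge n-1-(1-\delta)n=\delta n-1>r\ge i$ for $n$ large. For $r<i<n/2$ we need $i+(1-\delta)rn/i<n-1$; the left-hand side is a convex function of $i$, hence on $[r,n/2]$ it is maximised at an endpoint, and both endpoint values, $r+(1-\delta)n$ and $n/2+2(1-\delta)r$, are $<n-1$ once $n$ is large. So Chv\'atal's theorem yields a Hamilton cycle in $G$, contradicting the assumption on $\F$. Hence any $\F$ admitting no $\x$-acceptable Hamilton cycle has $|\F|>(1-\delta)n/(r-1)$ for $n$ large; letting $\delta\to0$ gives $m(\x,n)\ge(1/(r-1)-o(1))n$, which together with Theorem~\ref{mxn-ub2:thm} proves the stated equality.

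I expect the only genuine idea to be the reformulation in the first paragraph; everything after it is routine counting plus an off-the-shelf Hamiltonicity theorem. The one subtlety --- and the reason a degree-sequence theorem rather than Dirac's Theorem~\ref{dirac:thm} is needed --- is that $K_n\setminus P(\F)$ need \emph{not} have minimum degree $n/2$: the worst case for the lower bound (mirrored in the construction behind Theorem~\ref{mxn-ub2:thm}) is precisely one vertex joined in $P$ to almost everything, which would defeat a naive minimum-degree argument, and one has to exploit that only $O(1)$ vertices can have $P$-degree $\ge n/2$. If one insisted on using only Dirac's theorem, the alternative is to observe that the set $B$ of vertices with $\deg_P(v)\ge n/2$ satisfies $|B|\le 2r$, that each $v\in B$ still has more than $\delta n$ non-neighbours in $P$, to route the vertices of $B$ through short forced paths $p_iv_iq_i$ with $p_i,q_i$ chosen from their $P$-non-neighbourhoods, and then apply Dirac to the dense near-regular graph obtained by contracting those paths --- more bookkeeping, but entirely elementary.
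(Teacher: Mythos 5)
Your argument is correct. The reformulation in your first paragraph --- that $(2,1,\dots,1)$-acceptability for $\F$ is exactly Hamiltonicity of $K_n$ with the cliques on the sets of $\F$ deleted --- is precisely the paper's starting point, and your upper bound is taken from Theorem~\ref{mxn-ub2:thm} just as in the paper. Where you genuinely diverge is in how the Hamiltonicity criterion is deployed for the lower bound. The paper applies the Bondy--Chv\'atal closure, uses Dirac's theorem to find a single vertex $x$ incident with at least $n/2$ bad pairs, introduces the parameter $f=n-1-d_x$, derives the two bounds $t\geq f/(r-1)$ and $t\geq n(n-1-f)/(2r(r-1))$ by counting deleted edges locally at $x$ and globally over the bad neighbours of $x$, and balances them; this yields the explicit inequality $t\geq n(n-1)/\bigl((n+2r)(r-1)\bigr)$ for every $n$. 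You instead bound the sorted degree sequence of $G=K_n\setminus P(\F)$ directly --- the $i$-th largest $P$-degree is at most $\min\bigl((r-1)|\F|,\,2\binom{r}{2}|\F|/i\bigr)$ --- and feed this into Chv\'atal's degree-sequence theorem, verifying $d_i>i$ for all $i<n/2$ by convexity. Your route is arguably cleaner and avoids the two-parameter balancing, at the mild cost of producing only the asymptotic statement (via the $\delta$-and-contradiction formulation) rather than an explicit bound, and of invoking Chv\'atal's theorem, which the paper does not cite; both are standard off-the-shelf tools, so this is a matter of taste. Your closing remark correctly identifies why a plain Dirac argument fails (a single vertex may lose almost all its edges), which is exactly the obstruction the paper's parameter $f$ is designed to handle.
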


It is worth noting that in this case $m(\x,n)$ can be expressed more directly in graph theoretic terms; it is the smallest number of copies of $K_r$ whose removal makes $K_n$ non-Hamiltonian. We will use this formulation in the proof below. We will also the use the well known Bondy-Chv\'atal Theorem \cite{BC} which characterises Hamiltonian graphs.

\begin{theorem}[Bondy-Chv\'atal]
Let $G$ be a graph on $n$ vertices and $x,y$ be two non-adjacent vertices in $G$ with $\deg(x)+\deg(y)\geq n$. Then $G$ is Hamiltonian if and only if the graph formed by adding the edge $xy$ to $G$ is Hamiltonian.
\end{theorem}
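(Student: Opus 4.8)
The forward direction costs nothing and I would dispose of it first: if $G$ has a Hamilton cycle, that same cycle survives in the supergraph $G+xy$, so $G+xy$ is Hamiltonian regardless of the degree hypothesis. All the content lies in the converse, and the plan is to run the classical crossing (rotation) argument. Assume $G+xy$ is Hamiltonian and fix a Hamilton cycle $C$ of $G+xy$. If $C$ avoids the edge $xy$ then $C$ already lies in $G$ and we are done; otherwise $C$ uses $xy$, and deleting that single edge from $C$ leaves a Hamilton path of $G$ joining $x$ to $y$. So I may assume $G$ contains a Hamilton path $v_1 v_2 \cdots v_n$ with $v_1 = x$ and $v_n = y$, all of whose consecutive pairs are edges of $G$.

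The second step exploits the degree hypothesis by a pigeonhole on indices along this path. Define
\[
A = \{ i : 1 \le i \le n-1,\ v_1 v_{i+1} \in E(G) \}, \qquad B = \{ i : 1 \le i \le n-1,\ v_i v_n \in E(G) \}.
\]
Both sit inside the $(n-1)$-element set $\{1,\dots,n-1\}$. Since $i \mapsto v_{i+1}$ and $i \mapsto v_i$ are bijections onto $\{v_2,\dots,v_n\}$ and $\{v_1,\dots,v_{n-1}\}$ respectively, and every neighbour of $v_1$ lies in the first block while every neighbour of $v_n$ lies in the second, I obtain $|A| = \deg(x)$ and $|B| = \deg(y)$. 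Hence $|A|+|B| = \deg(x)+\deg(y) \ge n > n-1$, so $A$ and $B$ cannot be disjoint: there is an index $i$ with both $v_1 v_{i+1} \in E(G)$ and $v_i v_n \in E(G)$.

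The final step converts such a crossing pair into a Hamilton cycle of $G$ by the standard path-reversal trick: take
\[
v_1, v_2, \dots, v_i, v_n, v_{n-1}, \dots, v_{i+1}, v_1.
\]
This traverses the path edges up to $v_i$, then the chord $v_i v_n$, then the segment $v_{i+1}\cdots v_n$ in reverse, then the chord $v_{i+1} v_1$, closing up; every edge used lies in $G$ and every vertex appears exactly once, so $G$ is Hamiltonian.

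The one point that really has to be checked, and where the hypotheses earn their keep, is the counting that confines $A$ and $B$ to a universe of size only $n-1$ rather than $n$: this is exactly why the degree sum must reach $n$ for the pigeonhole to bite. The non-adjacency of $x$ and $y$, combined with $v_1=x$ and $v_n=y$, keeps the endpoint indices out of the sets (one checks $v_1 v_n \notin E(G)$ forces $n-1 \notin A$ and $1 \notin B$) and thereby guarantees the constructed cycle never attempts to use the missing edge $xy$. Beyond this bookkeeping the argument is entirely routine, so I expect no genuine obstacle, only the need to set up the two index sets so that simultaneous membership translates cleanly into a valid cycle.
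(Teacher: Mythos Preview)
Your proof is correct and is the standard crossing/rotation argument for the Bondy--Chv\'atal closure lemma. Note, however, that the paper does not actually prove this theorem: it is merely stated and cited as a known result from \cite{BC}, to be used as a tool in the proof of Theorem~\ref{211:thm}. So there is no ``paper's own proof'' to compare against; your write-up simply supplies the classical argument that the paper omits.
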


\begin{proof}[Proof of Theorem \ref{211:thm}]
The upper bound follows from Theorem \ref{mxn-ub2:thm}. In fact it is also rather easy to describe the construction directly. Let $P$ be a family of $(r-1)$-subsets of $\{2,\dots,n\}$ with $|P|=\lceil\frac{n-1}{r-1}\rceil$ whose union is $\{2,\dots,n\}$ (if $r-1$ divides $n-1$ we simply take a partition if not then we keep the overlap as small as possible). Now let $\F=\{\{1\}\cup X: X\in P\}$. If $C=c_1,\dots,c_n$ is a Hamilton cycle with $c_1=1$ then $c_2\in X$ for some $X\in P$ and then $\{1\}\cup X$ contains two consecutive elements of $C$. This means that $\F$ has no $\x$-acceptable Hamilton cycle and so $m(\x,n)\leq\lceil\frac{n-1}{r-1}\rceil$.

For the corresponding lower bound we use the fact that $m(\x,n)$ is equal to the smallest number $t$ for which we can delete $t$ copies of $K_r$ from $K_n$ and be left with a non-Hamiltonian graph. Suppose that the graph we are left with after deleting these copies of $K_r$ is $G$ and let $\deg_G(i)=d_i$. We will say that a pair $x,y\in[n]$ with $xy\not\in E(G)$ is \emph{bad} if $d_x+d_y<n$. By the Bondy-Chv\'atal Theorem $G$ must remain non-Hamiltonian even after we add to it all pairs which are not bad. So we must, by Dirac's theorem, have a vertex $x$ which is incident with at least $n/2$ bad edges. 

Let $f=n-1-d_x$ be the number of edges in $K_n$ incident with $x$ which are deleted (that is those that are contained in some deleted $K_r$). Each $K_r$ removed from $K_n$ contributes at most $r-1$ to $f$ and so 
\[
t\geq\frac{f}{r-1}.
\] 
For the pair $x,y$ to be bad we must have $d_y\leq f$ and so at least $n-1-f$ edges incident with $y$ must be deleted. In total we have $n/2$ 
vertices $y$ for which this holds and each edge deleted is incident with at most 2 of them. It follows that at least $\frac{n(n-1-f)}{4}$ edges must be deleted. Since each $K_r$ contributes $\binom{r}{2}$ to the total number of deleted edges, we have that
\[
t\geq\frac{n(n-1-f)}{2r(r-1)}.
\]
The first bound is increasing with $f$, the second bound is decreasing with $f$. They are equal when $f=\frac{n(n-1)}{n+2r}$ at which point their common value is $\frac{n(n-1)}{(n+2r)(r-1)}$. It follows that
\[
t\geq\frac{n(n-1)}{(n+2r)(r-1)}=(1-o(1))\frac{n}{r-1}.
\]
\end{proof}

A further instance of the general problem which we mention briefly is the case $\x=(r-1,1)$. The upper bound we get here from Theorem \ref{mxn-ub2:thm} is
\[
m((r-1,1),n)\leq\left(\frac{1}{(r-1)!}+o(1)\right)n^{r-2}.
\]
In particular $m((3,1),n)\leq(\frac{1}{6}+o(1))n^2$. This answers in the negative a question of Katona (Problem 5 from \cite{K}) which essentially asked ``Is it true that if $|\F|\leq(\frac{1}{4}+o(1))n^2$ then $K_n$ contains a Hamilton cycle which is both $(3,1)$-acceptable and $(2,2)$-acceptable for $\F$?''

\subsection{Other graphs}

It is possible to raise similar questions in which the structure we are interested in is something other than a Hamilton cycle.
Probably the most natural sort of structure is a spanning subgraph of some simple form. One simple variant is to consider Hamilton paths.

\begin{q}\label{F22-path:q}
What is the smallest family $\F\subseteq[n]^{(4)}$ with the property that $K_n$ does not contain a $(2,2)$-acceptable Hamilton path with respect to $\F$?
\end{q}
Where, naturally, a Hamilton path is $(2,2)$-acceptable if we do not have two disjoint edges of the path whose union is an element of $\F$.

We denote the size of this smallest family $\F$ by $p((2,2),n)$. The main reason for raising this question is that there are indications (see below) that the extremal families for this variant may have a simpler structure. This suggests that it may be good test case for developing approaches to this type of problem.

\begin{theorem}\label{22hpath:thm}
\[
\frac{1}{6}(n-3)(n-2)\leq p((2,2),n)\leq\binom{n-2}{2}
\]
\end{theorem}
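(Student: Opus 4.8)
The plan is to mimic the two halves of the proof of Theorem~\ref{22:thm}, since the path version is a degenerate case of the cycle version in which the ``wrap-around'' cases disappear. For the upper bound $p((2,2),n)\leq\binom{n-2}{2}$ I would simply reuse the family $\F=\{X\in[n]^{(4)}:\{1,2\}\subseteq X\}$ of Theorem~\ref{mxn-ub:thm} (which has exactly $\binom{n-2}{2}$ members): any Hamilton path is in particular a permutation of $[n]$, and the inductive claim proved in Theorem~\ref{mxn-ub:thm} (that every permutation of $[n]$ contains two disjoint intervals of lengths $2,2$ whose union contains $[2]$) already gives that no Hamilton path is $(2,2)$-acceptable with respect to this $\F$. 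So the upper bound is immediate and needs no new work.

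For the lower bound $p((2,2),n)\geq\frac16(n-3)(n-2)$ I would run the averaging argument of Theorem~\ref{mxn-lb:thm}, but over Hamilton paths rather than Hamilton cycles. Let $S$ now be the set of Hamilton paths of $K_n$, so $|S|=n!/2$ (orderings up to reversal), and for $F\in[n]^{(4)}$ let $H(F)\subseteq S$ be the paths in which $F$ appears as the union of two disjoint edges (two disjoint length-$2$ intervals) of the path. If $\F$ kills all $(2,2)$-acceptable Hamilton paths then $\bigcup_{F\in\F}H(F)=S$, hence $|\F|\cdot\max_F|H(F)|\geq |S|$. The key computation is a clean upper bound on $|H(F)|$ for a fixed $4$-set $F$: there are $c((2,2))=3$ ways to split $F$ into two pairs, each ordered pair of edges can be realised as two intervals in a path in a bounded number of ways, and having fixed where these two length-$2$ blocks sit, the remaining $n-4$ vertices are ordered freely, contributing $(n-3)!$ orderings of the ``super-sequence'' of $n-3$ objects (the two blocks plus $n-4$ singletons), with some further small multiplicities for orientations of the blocks and the $/2$ from reversal. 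Carrying this through carefully should give $|H(F)|\leq 6(n-3)!\cdot(\text{lower-order})$ against $|S|=n!/2$, yielding $|\F|\geq \frac{(n!/2)}{|H(F)|}\geq\frac16(n-3)(n-2)(1+o(1))$; the stated bound $\frac16(n-3)(n-2)$ is presumably the exact form obtained when one keeps the falling-factorial bookkeeping honest rather than passing to $o(\cdot)$.

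The main obstacle is getting the constant in the $|H(F)|$ bound \emph{exactly} right, so that the inequality $p((2,2),n)\geq\frac16(n-3)(n-2)$ holds on the nose and not merely asymptotically. Unlike in Theorem~\ref{22:thm}, where an $o(n^2)$ slack is tolerated, here one must count precisely: the number of Hamilton paths of $K_n$ in which a prescribed $4$-set $F$ occurs as two disjoint edges, accounting for (i) the $3$ pairings of $F$, (ii) the $2$ internal orders of each of the two $2$-blocks, (iii) the possibility of a path being counted twice when $F$ can be read off in more than one way (as noted after Theorem~\ref{mxn-lb:thm}, this over-counting only helps the lower bound, so an inequality $|H(F)|\leq\dots$ in the right direction suffices), and (iv) the reversal symmetry of paths. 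I expect that once these are tallied one gets $|H(F)|\le \frac{1}{6}\cdot\binom{n}{?}$-type expression that, divided into $n!/2$, produces exactly $\frac16(n-3)(n-2)$; verifying that the edge effects (blocks near the ends of the path, where fewer placements are possible, which only decreases $|H(F)|$) do not disturb the clean bound is the fiddly part, but it is routine rather than conceptually hard.
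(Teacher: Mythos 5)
Your upper bound is exactly the paper's: the family $\F=\{X\in[n]^{(4)}:1,2\in X\}$ with the observation that every permutation contains two disjoint $2$-intervals whose union contains $\{1,2\}$. That half is fine.

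The lower bound has a genuine gap: averaging over Hamilton paths cannot reach the stated constant. If $S$ is the set of Hamilton paths then $|S|=n!/2$, and for a fixed $4$-set $F$ the number of paths in which $F$ is the union of two disjoint edges is at most $3\cdot 2\cdot 2\cdot(n-2)!/2=6(n-2)!$ (three pairings, two blocks treated as single objects among $n-2$ objects, two internal orientations each, reversal). Note it is $(n-2)!$, not $(n-3)!$ as you wrote --- with two blocks of size $2$ and $n-4$ singletons you are linearly ordering $n-2$ objects --- and with your $(n-3)!$ the quotient $\frac{n!/2}{6(n-3)!}$ would be cubic in $n$, so the bookkeeping is already inconsistent. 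Done correctly, the method yields only $|\F|\geq\frac{n!/2}{6(n-2)!}=\frac{n(n-1)}{12}$, which is asymptotically \emph{half} of $\frac{1}{6}(n-3)(n-2)$ and strictly smaller for all $n\geq 8$. No amount of careful falling-factorial bookkeeping closes this factor of $2$; it is a limitation of the approach, not of the constants.

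The missing idea is that the paper averages over Hamilton \emph{cycles}, not paths, and exploits the stronger covering condition this forces: if no Hamilton path is $(2,2)$-acceptable for $\F$, then every Hamilton cycle $C$ must lie in $H(F)$ for at least \emph{two} distinct $F\in\F$, because if $C$ lay in a unique $H(F)$ one could delete one of the two edges of $C$ whose union is that $F$ and obtain an acceptable Hamilton path. Hence $\sum_{F\in\F}|H(F)|\geq 2(n-1)!$ with $|H(F)|\leq 12(n-3)!$, giving $|\F|\geq\frac{(n-1)(n-2)}{6}\geq\frac{(n-3)(n-2)}{6}$. It is exactly this doubling that produces the constant $\frac16$ in place of your $\frac1{12}$.
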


\begin{proof}
For the upper bound note that if $\F=\{X\in[n]^{(4)}:1,2\in X\}$ there is no $(2,2)$-acceptable Hamilton path with respect to $\F$.

For the lower bound consider, as in the proof of Theorem \ref{mxn-lb:thm}, the set $S$ of all Hamilton cycles in $K_n$ and denote by $H(F)$ the set of all elements of $S$ which are not acceptable with respect to the single set $F$. If $\F$ is such that $K_n$ has no $\x$-acceptable Hamilton path with respect to $\F$ then every element of $S$ is contained in at least $2$ of the $H(F)$ with $F\in\F$ (if a Hamilton cycle was contained in a unique $H(F)$ then deleting one of the edges whose union is $F$ would give an acceptable Hamilton path). It follows that 
$\sum_{F\in\F}|H(F)|\geq2(n-1)!$ and since $|H(F)|=12(n-3)!$ the bound follows.
\end{proof}

The upper bound construction is the same as that given for the Hamilton cycle case in Theorem \ref{mxn-ub:thm}. However, in contrast
to the Hamilton cycle problem this construction is minimal in that we cannnot remove any $4$-set from it without making some Hamilton path $(2,2)$-acceptable. It is possible that this upper bound is exactly sharp and if this is the case then the simple structure of the extremal family may make the problem easier that the Hamilton cycle case. 

\section{Further Questions}

Several questions and conjectures have been mentioned in earlier sections. In this section we summarise these and collect a few other possible questions and directions for further study, concentrating mainly on the situation where our constraints are given by a set system.

For matchings under constraints given by a graph the most obvious question is to bound $b(n)$ more tightly. We have no feeling for where the true order of magnitude should lie between $n^{1/2}$ and $n^{3/4}$. In addition it would be nice to know more about the general behaviour of $b$; for instance is it a non-decreasing function. The $k=3$ case of Question \ref{match-small:q} is also an appealing problem which we repeat here.

\begin{q}\label{match-3:q}
Is it true that for all sufficiently large $n$, if $G$ is a $3$-regular bipartite graph and $H$ is a 1-regular bipartite graph, we are guaranteed to have a perfect matching in $G$ which is acceptable for $H$?
\end{q}

For Hamilton cycles under constraints given by a set system the main open problem is Conjecture \ref{mxn:conj} on the asymptotic behaviour of $m(\x,n)$. As well as this asymptotic behaviour we could ask for exact values of $m(\x,n)$. This is probably a much harder problem even for our main example $\x=(2,2)$ (at least if Conjecture \ref{mxn:conj} is correct). However, as we indicated earlier, there may be variants of the problem for which this is a more approachable question. For instance:

\begin{q}
Is it true that if $\F\subseteq[n]^{(4)}$ with $|\F|<\binom{n-2}{2}$ then $K_n$ does not contain a $(2,2)$-acceptable Hamilton path with respect to $\F$?
\end{q}

It may be worth seeking out other variants along these lines for which the conjectured extremal family has a simple structure.

Another direction also raised in \cite{K} is to consider degree versions. Given $0\leq t<r$ and $F\subseteq[n]^{(r)}$ we let $d_t(\F)$ be the maximum $t$-degree of $\F$; that is the maximum over all $t$-subsets $D\in[n]^{(t)}$ of the number of elements of $\F$ which contain $D$. We now define $m_t(\x,n)$ to be the smallest $t$-degree $d_t(\F)$ where $\F\subseteq[n]^{(r)}$ is a family with the property that $K_n$ does not contain an $\x$-acceptable Hamilton cycle with respect to $\F$. This is a generalisation of our earlier question since $d_0(\F)=|\F|$ and so $m(\x,n)=m_0(\x,n)$.

For the family $\F=\{\{1,x,x+1,y\}: 2\leq x\leq n-1, y\not=1,x,x+1\}\cup\{\{1,2,y,n\}: y\not=1,2,n\}$ there are no $(2,2)$-acceptable Hamilton cycles in $K_n$. Also no pair is contained in more that $3n-13$ of the $4$-sets in $\F$. It follows that $m_2((2,2),n)\leq 3n-13$. It seems plausible that $m_2((2,2),n)$ is linear in $n$ but we could not prove this. For $m_1(\x,n)$ it is trivial that $\frac{m_0(\x,n)}{n}\leq m_1(\x,n)\leq m_0(\x,n)$ so the order of magnitude of $m_1(\x,n)$ is between $n^{r-k-1}$ and $n^{r-k}$.

In all our constructions $\F$ is very asymmetric. This leads us to the following question: 

\begin{q}
What is the smallest vertex-transitive $\F$ with the property that $K_n$ does not contain a $(2,2)$-acceptable Hamilton cycle with respect to $\F$? In particular is there such a family with $|\F|=cn^2$ for some $c$?
\end{q} 

We suspect that the answer to the second of these questions is no. The connection between Hamilton cycles and symmetry is tantalising (consider for instance Lov\'asz's famous question of whether all but finitely many connected vertex-transitive graphs are Hamiltonian \cite{Lov}) and questions like the one above may give some insight into it. There is also a connection between this problem and the degree variants mentioned above since a vertex-transitive $\F$ would have $d_1(\F)=\frac{|\F|}{n}$ and so if the above question has a positive answer then $m_1((2,2),n)$ is linear in $n$.

Recalling Katona's question on the minimum size of an $\F$ with no Hamilton cycle which is both $(2,2)$-acceptable and $(3,1)$-acceptable it would be possible to pose the more general problem involving more than one $\x$. Specifically, if $\x=(x_1,\dots,x_k)$, $\y=(y_1,\dots,y_l)$ with $x_i$ positive integers and $\sum x_i=\sum y_i=r$ then what is the smallest $\F$ for which $K_n$ has no Hamilton cycle which is both $\x$-acceptable and $\y$-acceptable. The size of such an $\F$ is necessarily smaller than both $m(\x,n)$ and $m(\y,n)$. It would be interesting to know whether behaviour analogous to principality of the graph Tur\'an density exists. That is, whether the size of the minimal $\F$ is essentially equal to whichever of of $m(\x,n)$ and $m(\y,n)$ is smallest or whether it can be significantly smaller than both of them.

Finally, could these results on the existence of $\x$-acceptable Hamilton cycle in $K_n$ be extended to $\x$-acceptable Hamilton cycles in an arbitrary $r$-regular graph $G$?

\section{Acknowledgement}
Part of this work was carried out during the author's visit to Budapest in March 2011. I am grateful to Gyula Katona for 
several useful and interesting conversations. I also thank the London Mathematical Society and the Alfr\'ed R\'enyi Institute for providing funding for this visit.


\begin{thebibliography}{}
\bibitem{Alon} N. Alon (1986) Eigenvalues, geometric expanders, sorting in rounds and Ramsey Theory, Combinatorica 6, 207--219.
\bibitem{BC} J.A. Bondy, V. Chvatal (1976) A method in graph theory, Discrete Math., 15, 111--135.
\bibitem{DKS} J. Demetrovics, G.O.H. Katona, A. Sali (1998) Design type problems motivated by database theory, J. Statist. Plann. Inference 72, 149--164.
\bibitem{Dirac} G.A. Dirac (1952) Some theorems on abstract graphs, Proc. London Math. Soc. Ser. 3 (2), 69--81.
\bibitem{K} G.O.H. Katona (2005) Constructions via Hamiltonian theorems, Discrete Math., 303, 87--103.
\bibitem{KST} T. K\H ov\'ari, V. S\'os, P. Tur\'an (1954) On a problem of K. Zarankiewicz, Colloq. Math. 3: 50–57.
\bibitem{Lov} L. Lov\'asz (1970) Problem 11 in Combinatorial structures and their applications, University of Calgary, Calgary, Alberta, Canada.
\bibitem{Rodl} V. R\"odl (1985) On a packing and covering problem, European J. Combin., 6, no. 1, 69--78.
\end{thebibliography}
\end{document}